\newtheorem{thm}{Theorem}[section]
\newtheorem{lem}[thm]{Lemma}
\newtheorem{rem}[thm]{Remark}
\numberwithin{equation}{section}
\newcommand{\C}{\mathbb{C}}
\newcommand{\R}{\mathbb{R}}
\newcommand{\N}{\mathbb{N}}
\newcommand{\mg}{\mathfrak{g}}
\newcommand{\mr}{\mathfrak{r}}
\newcommand{\ms}{\mathfrak{s}}
\newcommand{\Sl}{\mathfrak{sl}}
\newcommand{\so}{\mathfrak{so}}
\newcommand{\mh}{\mathfrak{h}}
\newcommand{\mn}{\mathfrak{n}}
\title[Cohomological rigidity of conformal Galilei algebras and their central extensions]{Cohomological rigidity of conformal Galilei algebras and their central extensions}
\author[A. Khudoyberdiyev]{Abror Khudoyberdiyev$^{a,b}$}
\author[D. Jumaniyozov]{Doston Jumaniyozov$^{b,a}$}
\address{$^a$V.I.Romanovskiy Institute of Mathematics Uzbekistan Academy of Sciences.}
\address{$^b$National University of Uzbekistan named after Mirzo Ulugbek.}
\email{{\tt khabror@mail.ru,  dostondzhuma@gmail.com}}
\begin{document}
\date{}

\begin{abstract}
In this paper, we study the second adjoint cohomology of the compexification of the real conformal Galilei algebras \(\mathfrak{cga}_\ell(d,\mathbb{R})\) and their central extensions. These algebras are non-semisimple Lie algebras that appear as non-relativistic analogues of conformal Lie algebras. Using cohomological methods, including the Hochschild-Serre factorization theorem, we compute the space \(H^2(\mathfrak{g}, \mathfrak{g})\) and examine conditions under which these algebras are cohomologically rigid. Our main result shows that the conformal Galilei algebras are rigid for all spatial dimensions \(d \neq 2\) when the spin \(\ell\) is a half-integer. This provides a complete characterization of the formal rigidity of these algebras in the specified parameter range.

\medskip
\noindent {\bf Keywords:} Conformal Galilei algebra, Lie algebra cohomology, central extension, infinitesimal deformation.

\medskip
\noindent {\bf 2020 Mathematics Subject Classification:} 17B05, 17B56.
\end{abstract}

\maketitle

\thispagestyle{empty}

\section{Introduction}

The study of symmetries plays a fundamental role in both mathematics and theoretical physics. In the context of partial differential equations on manifolds-particularly those of geometric origin-the associated symmetry algebra provides deep structural information. It enables the derivation of new solutions from known ones and reflects essential geometric properties of the underlying manifold. Among such systems, the free Schr\"odinger equation stands out for its foundational role in quantum mechanics. Its symmetry algebra, known as the Schr\"odinger algebra, belongs to a broader class of non-relativistic symmetry algebras called conformal Galilei algebras, which were first systematically analyzed in \cite{Havas-Plebanski1978}.

One of the most powerful tools for understanding the structure and classification of Lie algebras is cohomology. Introduced through the foundational works of Cartan, Chevalley, Eilenberg, Koszul, Hochschild and Serre \cite{ChEg, Ksz, HchSe}, Lie algebra cohomology has deep connections to deformation theory, extension theory, and geometric representation theory. In particular, the second cohomology group with coefficients in the adjoint module plays a central role in determining whether a Lie algebra is formally rigid-that is, whether it admits nontrivial infinitesimal deformations. For instance, the classical result in algebraic geometry asserts that every algebraic variety-such as the variety formed by algebras defined through identities-can be decomposed into a finite union of irreducible components. In this context, rigid algebras are characterized by having open orbits under the natural action of the general linear group in the Zariski topology. It is also known that Lie algebras with vanishing second cohomology in the adjoint representation are rigid, and the closures of their orbits correspond to irreducible components of the variety (see \cite{NR}, Theorem~7.1). This insight has inspired a significant body of work aimed at identifying algebras with open orbits and understanding their structural properties \cite{Ru, Gest, NR}.

While semisimple Lie algebras demonstrate cohomological rigidity-thanks to vanishing theorems such as Whitehead’s Lemmas-the cohomological structure of non-semisimple Lie algebras is significantly more intricate. Among these, a particularly important class is that of perfect Lie algebras, characterized by the condition that the algebra is equal to its own derived subalgebra, i.e., $\mathfrak{g} = [\mathfrak{g}, \mathfrak{g}]$. Semisimple Lie algebras are classical examples, but many non-semisimple Lie algebras arising in mathematical physics are also perfect. For such algebras, the second cohomology group often does not vanish, and understanding its structure becomes essential in exploring their deformation theory and classifying modules of Lie algebra structures.
Numerous studies have focused on the cohomology of perfect Lie algebras (see \cite{BU75},\cite{Ru},\cite{DJuSSh}). Namely, the cohomological rigidity of Schr\"odinger algebras is examined in \cite{Ru}, while the infinitesimal deformations of the $n$-th Schr\"odinger algebra are investigated in \cite{DJuSSh}.

In this paper, we investigate the second adjoint cohomology of a class of non-semisimple perfect Lie algebras, with particular emphasis on conformal Galilei algebras and their central extensions. Our aim is to analyze the conditions under which these algebras exhibit cohomological rigidity and to describe the structure of their infinitesimal deformations. By doing so, we contribute to the broader effort of understanding the deformation theory of non-semisimple perfect Lie algebras and their role in mathematical physics.

The structure of the paper is as follows. In Section 2, we present essential background information on Lie algebra cohomology, including the Hochschild-Serre factorization theorem. Section 3 offers a brief introduction to the conformal Galilei algebras. Section 4 is devoted to computing the second cohomology spaces of these algebras with respect to the adjoint representation. Lastly, in Section 5, we investigate the second cohomology groups of the mass central extensions of the conformal Galilei algebras.

\section{Preliminaries}

Let $\mathfrak{g}$ be a Lie algebra. A $\mathfrak{g}$-module is a vector space $M,$ together with a homomorphism of $\mathfrak{g}$ into the Lie algebra of all linear transformations of $M.$ We denote by $v.m$ the transform of the element $m\in M$ by the linear transformation which corresponds to the element $v\in\mathfrak{g}.$ The subspace of $M$ which consists of all $m\in M$ with $v.m=0,$ for all $v\in\mathfrak{g}$ is denoted by $M^{\mathfrak{g}}.$
Set
$$C^0(\mathfrak{g},M)=M, \quad C^n(\mathfrak{g},M)={\rm Hom}(\wedge^{n}\mathfrak{g},M), \ n>0.$$
That is $C^n(\mathfrak{g},M)$ is the space of alternating $n$-linear maps. The elements of $C^n(\mathfrak{g},M)$ are called $n$-{\it cochains}. Now, define the differential $d^n:C^n(\mathfrak{g},M)\to C^{n+1}(\mathfrak{g},M).$ Let $\varphi\in C^n(\mathfrak{g},M),$ be an $n$-cochain. The differential $d^n\varphi\in C^{n+1}(\mathfrak{g},M)$ of $\varphi,$ is defined as follows:
\begin{equation} \label{eq1} \begin{array}{ll}
(d^n\varphi)(e_0,\dots,e_n)&=\sum\limits_{i=0}^{n+1}(-1)^{i}e_i.\varphi(e_0,\dots,\hat{e}_i,\dots,e_n)\\[3mm]
&+\sum\limits_{1\leq i<j\leq n}(-1)^{i+j}\varphi([e_i,e_j],e_0,\dots,\hat{e}_i,\dots,\hat{e}_j,\dots,e_n),
\end{array}
\end{equation}
 where $e_0,\dots,e_n\in\mathfrak{g}$ and the sign $\hat{}$ indicates that the argument below it must be omitted. Set $Z^{n}(\mathfrak{g},M):={\rm Ker}(d^{n+1}),$ $B^{n}(\mathfrak{g},M):={\rm Im}(d^{n}).$ The property $d^{n+1}\circ d^{n}=0$ leads that the derivative operator $d=\sum\limits_{i\geq0}d^i$ satisfies $d\circ d=0.$ Therefore, the $n$-th cohomology group is well defined by
$$H^{n}(\mathfrak{g},M)=Z^{n}(\mathfrak{g},M)/B^{n}(\mathfrak{g},M).$$
The elements of $Z^{n}(\mathfrak{g},M)$ and $B^{n}(\mathfrak{g},M)$ are called $n$-{\it cocycles} and $n$-{\it coboundaries}, respectively.

If $\mathfrak{r}$ is an ideal of $\mathfrak{g},$ then we define on each $C^n(\mathfrak{r},M)$ the structure of $\mathfrak{g}$-module. Set $C^{0}(\mathfrak{r},M)=M,$ and thus is already defined as a $\mathfrak{g}$-module. For $n>0,$ $\omega\in C^n(\mathfrak{r},M),$ $v\in\mathfrak{g},$ $e_1,\dots,e_n\in\mathfrak{r},$ we define the transform $v.\omega$ by the following formula:
\begin{align}{\label{action}}
(v.\omega)(e_1,\dots,e_n)=v.\omega(e_1,\dots,e_n)-\sum_{i=1}^{n}\omega(e_1,\dots,e_{i-1},[v,e_{i}],e_{i+1},\dots,e_n).
\end{align}

In general, the computation of cohomology spaces is complicated. However, if a Lie algebra is the semidirect product of a semisimple subalgebra and its radical, the Hochschild-Serre factorization theorem (Theorem 13 in \cite{HchSe}) simplifies this computation. Namely, let $\mathfrak{g}=\mathfrak{s}\ltimes\mathfrak{r}$ be a Lie algebra, where $\mathfrak{s}$ is the semisimple part of $\mathfrak{g}$, $\mathfrak{r}$ is its radical and $M$ is a finite-dimensional $\mathfrak{g}$-module. Then one has  the following isomorphisms of vector spaces:
$$H^p(\mathfrak{g},M)\cong\sum_{m+n=p}H^m(\mathfrak{s},\mathbb{C})\otimes H^n(\mathfrak{r},M)^{\mathfrak{s}}.$$
Here, $H^q(\mathfrak{r},M)^{\mathfrak{s}}$ is the space of $\mathfrak{s}$-invariant cocycles of $\mathfrak{r}$ with values in $M$ which is defined as
$$H^q(\mathfrak{r},M)^{\mathfrak{s}}=\{[\varphi]\in H^q(\mathfrak{r},M) \ | \ \forall v\in\mathfrak{s}:  v.\varphi\in B^q(\mathfrak{r},M)\},$$
where $[\varphi]$ denotes the cohomology class of the $q$-cocycle $\varphi$ and the action of $\mathfrak{g}$ on $Z^q(\mathfrak{r},M)$ is defined as in \eqref{action}. Namely, since the $\mathfrak{g}$-action on $C^q(\mathfrak{r},M)$, as defined in \eqref{action}, is compatible with the differential, $Z^q(\mathfrak{r},M)$ and $B^q(\mathfrak{r},M)$ are invariant under the action of $\mathfrak{s}$, and therefore this action can be lifted to $H^q(\mathfrak{r},M)$.
Note that the following isomorphism holds for $H^q(\mathfrak{r},M)^{\mathfrak{s}}$:
\begin{align}{\label{iso23}}
\nonumber
H^q(\mathfrak{r},M)^{\mathfrak{s}}&=[Z^q(\mathfrak{r},M)^{\mathfrak{s}}+B^q(\mathfrak{r},M)]/B^d(\mathfrak{r},M)\\
                                            &\simeq Z^q(\mathfrak{r},M)^{\mathfrak{s}}/[B^q(\mathfrak{r},M)\cap Z^q(\mathfrak{r},M)^{\mathfrak{s}}]\\
                                            \nonumber
                                            &= Z^q(\mathfrak{r},M)^{\mathfrak{s}}/B^q(\mathfrak{r},M)^{\mathfrak{s}}.
\end{align}
The first equality is based on the following fact: Since $Z^d(\mathfrak{r},M)$ is finite-dimensional, $Z^d(\mathfrak{r},M)$ is completely reducible as an $\mathfrak{s}$-module. Therefore, for any cocycle $\varphi\in Z^d(\mathfrak{r},M)$ there exists an $\mathfrak{s}$-invariant cocycle $\varphi^{\prime}$ such that $\varphi-\varphi^{\prime}\in B^d(\mathfrak{r},M)$ (see Proposition 2.1 in \cite{WuZh}).

We recall the following from \cite{BU75}. Let $\mg=\ms\ltimes V$, where $\ms$ is semisimple and $V$ is an $\ms$-module.
Consider $V$ as an abelian Lie algebra. Then we have a short exact sequence of $\mg$-modules
\[
0\to V\to \mg\to \mg/V\to 0.
\]
This is also a short exact sequence of $V$-modules by restriction to $V\subseteq \mg$. Here $V$ and $\mg/V$ are trivial
modules. This yields a long exact sequence in cohomology, because the functor of $\ms$-invariants is exact on finite-dimensional
modules.

\begin{lem}\label{2.1}
\cite{BU75} Let $\mg=\ms\ltimes V$, where $\ms$ is semisimple and $V$ is an $\ms$-module. Then we have
\begin{align*}
0 & \to H^0(V,V)^\ms\to H^0(V,\mg)^\ms\to H^0(V,\mg/V)^\ms \\
  & \to H^1(V,V)^\ms\to H^1(V,\mg)^\ms\to H^1(V,\mg/V)^\ms \\
  & \to H^2(V,V)^\ms\to H^2(V,\mg)^\ms\to H^2(V,\mg/V)^\ms \\
  & \to H^3(V,V)^\ms\to H^3(V,\mg)^\ms\to H^3(V,\mg/V)^\ms \to \cdots
\end{align*}
The sequence continues until $k> \dim V$, in which case we have
\[
H^k(V,V)^{\ms}=H^k(V,\mg)^\ms=H^k(V,\mg/V)^\ms=0.
\]
\end{lem}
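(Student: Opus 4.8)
The plan is to realize the displayed sequence as the long exact cohomology sequence attached to a short exact sequence of Chevalley--Eilenberg cochain complexes, and then to pass to $\mathfrak{s}$-invariants using the complete reducibility guaranteed by semisimplicity of $\mathfrak{s}$. First I would record the relevant module structures. Restricting the adjoint $\mathfrak{g}$-action to $V\subseteq\mathfrak{g}$, the sequence
\[
0 \to V \to \mathfrak{g} \to \mathfrak{g}/V \to 0
\]
is a short exact sequence of $V$-modules. Since $V$ is abelian it acts trivially on itself, and since $[v,s]=-s.v\in V$ for $v\in V$, $s\in\mathfrak{s}$, the quotient $\mathfrak{g}/V\cong\mathfrak{s}$ is a trivial $V$-module as well; only the middle term carries a nontrivial $V$-action. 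All three are simultaneously $\mathfrak{g}$-modules, hence $\mathfrak{s}$-modules, and the two maps are $\mathfrak{s}$-equivariant.

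Next I would apply the functor $C^n(V,-)={\rm Hom}(\wedge^n V,-)$. Over a field every vector space is free, so ${\rm Hom}(\wedge^n V,-)$ is exact; applying it termwise produces a short exact sequence of cochain complexes
\[
0 \to C^*(V,V) \to C^*(V,\mathfrak{g}) \to C^*(V,\mathfrak{g}/V) \to 0.
\]
Via the action \eqref{action} each $C^n(V,M)$ is an $\mathfrak{s}$-module, the differentials \eqref{eq1} are $\mathfrak{s}$-equivariant, and the two horizontal maps are $\mathfrak{s}$-equivariant because they are induced by $\mathfrak{s}$-module maps. Thus this is a short exact sequence of complexes of $\mathfrak{s}$-modules, and the zig-zag lemma already yields the (non-invariant) long exact sequence in $H^*(V,-)$.

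The crucial step is to pass to $\mathfrak{s}$-invariants. Because $\mathfrak{s}$ is semisimple and every module in sight is finite-dimensional, all of them are completely reducible, so the invariants functor $(-)^{\mathfrak{s}}$, which extracts the trivial isotypic component, is exact. Exactness has two consequences I would exploit: applying $(-)^{\mathfrak{s}}$ to the short exact sequence of complexes preserves exactness, and it commutes with taking cohomology, so that $H^n\bigl(C^*(V,M)^{\mathfrak{s}}\bigr)\cong H^n(V,M)^{\mathfrak{s}}$ in the sense of \eqref{iso23} (here exactness of $(-)^{\mathfrak{s}}$ is precisely what identifies $(B^n)^{\mathfrak{s}}$ with the coboundaries of the invariant subcomplex). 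The zig-zag lemma applied to the \emph{invariant} short exact sequence of complexes then delivers exactly the displayed long exact sequence, the connecting homomorphisms inheriting $\mathfrak{s}$-equivariance from the uninvariant ones. Finally, the termination statement is immediate: for $k>\dim V$ one has $\wedge^k V=0$, hence $C^k(V,M)=0$ and a fortiori $H^k(V,M)^{\mathfrak{s}}=0$ for each of the three coefficient modules.

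I expect the only genuinely non-formal point, and hence the hardest part of an otherwise diagrammatic argument, to be the justification that $(-)^{\mathfrak{s}}$ is exact and commutes with the cohomology differentials, so that the connecting maps remain well defined after taking invariants. This is exactly where the semisimplicity of $\mathfrak{s}$ and the finite-dimensionality of the modules enter; everything else is a bookkeeping of $\mathfrak{s}$-equivariance of the standard constructions.
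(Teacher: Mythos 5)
Your proposal is correct and matches the argument the paper itself sketches (following Burde--Wagemann): the short exact sequence $0\to V\to\mg\to\mg/V\to0$ of $V$-modules with trivial outer terms, the induced short exact sequence of Chevalley--Eilenberg complexes, and passage to $\ms$-invariants via exactness of $(-)^{\ms}$ on finite-dimensional modules, which is precisely the point the paper singles out as the reason the long exact sequence survives taking invariants. The termination for $k>\dim V$ via $\wedge^k V=0$ is also as in the paper.
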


For later use, we recall the well known Schur's lemma.

\begin{lem}
\cite{HU78}
Let $V$ be a finite dimensional irreducible $\mg$-module over an algebraically closed field $k$, and $\varphi : V \to V$ is an
$\mg$-homomorphism. Then $\varphi = \lambda \cdot Id $ for some $\lambda\in k.$

\end{lem}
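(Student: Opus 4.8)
The plan is to exploit the algebraic closedness of $k$ to produce an eigenvalue of $\varphi$, and then to use irreducibility to force $\varphi$ to act as that eigenvalue everywhere. First I would observe that, since $V$ is finite-dimensional and $k$ is algebraically closed, the characteristic polynomial of the linear map $\varphi$ splits over $k$; in particular $\varphi$ admits at least one eigenvalue $\lambda\in k$, with a corresponding nonzero eigenvector. This is the single place in the argument where algebraic closedness is genuinely used.

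Next I would pass to the map $\psi:=\varphi-\lambda\cdot Id$. The key structural observation is that $\psi$ is again a $\mg$-homomorphism: the identity commutes with the $\mg$-action, a scalar multiple of a $\mg$-homomorphism is a $\mg$-homomorphism, and the difference of two $\mg$-homomorphisms is one as well. It then follows that $\ker\psi$ is a $\mg$-submodule of $V$, since for any $v\in\mg$ and $m\in\ker\psi$ one has $\psi(v.m)=v.\psi(m)=0$, so $v.m\in\ker\psi$. (Equally, $\operatorname{im}\psi$ is a submodule, though I would only need the kernel.)

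Finally I would invoke irreducibility. Because $\lambda$ is an eigenvalue, $\ker\psi$ contains a nonzero eigenvector, so $\ker\psi\neq 0$. Since $V$ is irreducible, its only submodules are $0$ and $V$; as the kernel is nonzero, it must equal $V$, whence $\psi=0$ and $\varphi=\lambda\cdot Id$. There is no serious obstacle here, the argument being entirely classical; the only subtlety worth flagging is the indispensability of algebraic closure, since over a non-closed field $\varphi$ need not have an eigenvalue in $k$ and the conclusion can fail. Everything else reduces to the routine facts that kernels of $\mg$-equivariant maps are submodules and that irreducibility admits only the trivial submodules.
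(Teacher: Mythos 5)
Your argument is correct and is precisely the classical proof of Schur's lemma found in the cited reference \cite{HU78}; the paper itself gives no proof, simply quoting the result. Extracting an eigenvalue via algebraic closedness, passing to $\varphi-\lambda\cdot{\rm Id}$, and using that its nonzero kernel must be all of $V$ by irreducibility is exactly the standard route, and every step is sound.
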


Below we present a generalized version of the Schur's lemma.

\begin{lem}
\cite{KI2008}
Let $\mg$ be a complex Lie algebra, let $\mathcal{V} = \mathcal{V}_1 \oplus \dots \oplus \mathcal{V}_m$ and $\mathcal{W} = \mathcal{W}_1 \oplus\dots\oplus \mathcal{W}_n$ be completely reducible $\mg$-modules, where $\mathcal{V}_1, \dots , \mathcal{V}_n, \mathcal{W}_1, \dots , \mathcal{W}_n$ are irreducible
submodules. Then any $\mg$-module homomorphism $\varphi : \mathcal{V} \to \mathcal{W}$ can be represented as
$$\varphi=\sum_{i=1}^{m}\sum_{j=1}^{n}\lambda_{ij}\varphi_{ij},$$
where the operator $\varphi_{ij} : \mathcal{V}_i \to \mathcal{W}_j$ are fixed $\mg$-module homomorphisms and $\lambda_{ij}$ are complex
numbers. Furthermore $\varphi_{ij}\neq 0$ if and only if $\varphi_{ij}$ is an isomorphism if and only if $\mathcal{V}_i$ and $\mathcal{W}_j$ are isomorphic as $\mg$-modules.
\end{lem}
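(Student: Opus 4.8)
The plan is to reduce the statement to the ordinary Schur's lemma by decomposing $\varphi$ along the two given direct-sum decompositions. For each index $i$ let $\iota_i^{\mathcal{V}}:\mathcal{V}_i\to\mathcal{V}$ and $\pi_i^{\mathcal{V}}:\mathcal{V}\to\mathcal{V}_i$ denote the canonical inclusion and projection associated with $\mathcal{V}=\mathcal{V}_1\oplus\dots\oplus\mathcal{V}_m$, and likewise introduce $\iota_j^{\mathcal{W}}$ and $\pi_j^{\mathcal{W}}$ for $\mathcal{W}=\mathcal{W}_1\oplus\dots\oplus\mathcal{W}_n$. First I would observe that each of these maps is a $\mg$-module homomorphism, since the summands are submodules; consequently the compositions $\varphi^{(ij)}:=\pi_j^{\mathcal{W}}\circ\varphi\circ\iota_i^{\mathcal{V}}:\mathcal{V}_i\to\mathcal{W}_j$ are $\mg$-module homomorphisms as well. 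Using the identities $\sum_i\iota_i^{\mathcal{V}}\circ\pi_i^{\mathcal{V}}={\rm Id}_{\mathcal{V}}$ and $\sum_j\iota_j^{\mathcal{W}}\circ\pi_j^{\mathcal{W}}={\rm Id}_{\mathcal{W}}$, I would write
$$\varphi=\sum_{i=1}^{m}\sum_{j=1}^{n}\iota_j^{\mathcal{W}}\circ\varphi^{(ij)}\circ\pi_i^{\mathcal{V}},$$
which expresses $\varphi$ as a sum of maps each factoring through a single pair $(\mathcal{V}_i,\mathcal{W}_j)$ of irreducible summands.

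The next step is to analyze each $\varphi^{(ij)}$ via Schur's lemma. Since $\mathcal{V}_i$ and $\mathcal{W}_j$ are irreducible, the kernel and image of $\varphi^{(ij)}$ are submodules, so $\varphi^{(ij)}$ is either zero or an isomorphism; in particular a nonzero homomorphism can exist only when $\mathcal{V}_i\cong\mathcal{W}_j$. For each pair with $\mathcal{V}_i\cong\mathcal{W}_j$ I would fix once and for all a nonzero isomorphism $\psi_{ij}:\mathcal{V}_i\to\mathcal{W}_j$ and define the fixed homomorphism $\varphi_{ij}:=\iota_j^{\mathcal{W}}\circ\psi_{ij}\circ\pi_i^{\mathcal{V}}:\mathcal{V}\to\mathcal{W}$; for the remaining pairs I set $\varphi_{ij}:=0$. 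Because the ground field is algebraically closed, the ordinary Schur's lemma yields $\dim_{\C}{\rm Hom}_{\mg}(\mathcal{V}_i,\mathcal{W}_j)\le 1$: indeed, if $\chi$ is any nonzero element of this space then $\psi_{ij}^{-1}\circ\chi$ is an endomorphism of the irreducible module $\mathcal{V}_i$, hence a scalar. Thus each $\varphi^{(ij)}$ equals $\lambda_{ij}\psi_{ij}$ for a unique $\lambda_{ij}\in\C$ (with $\lambda_{ij}=0$ when the summands are not isomorphic), and substituting into the display above gives $\varphi=\sum_{i,j}\lambda_{ij}\varphi_{ij}$, as claimed.

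Finally, the ``furthermore'' assertion follows directly from the same analysis: by construction $\varphi_{ij}$ is nonzero precisely when an isomorphism $\psi_{ij}$ was chosen, that is, precisely when $\mathcal{V}_i\cong\mathcal{W}_j$, and in that case $\varphi_{ij}$ restricts to the isomorphism $\psi_{ij}$ between the relevant summands, so it is itself an isomorphism onto $\mathcal{W}_j$. This establishes the equivalence of the three conditions. I expect no serious obstacle in this argument; the only points requiring care are the bookkeeping of the inclusions and projections together with the choice of the fixed isomorphisms $\psi_{ij}$, and the use of the algebraic closedness of $\C$ to guarantee that the component Hom-spaces are at most one-dimensional, which is exactly what reduces the problem to the scalar coefficients $\lambda_{ij}$.
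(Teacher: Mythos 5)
Your proof is correct, and the paper itself offers no proof of this lemma---it is stated as a recalled result with a citation to \cite{KI2008}---so there is nothing in the paper to compare against; your argument via the canonical inclusions and projections, Schur's lemma applied to each component $\varphi^{(ij)}$, and the one-dimensionality of ${\rm Hom}_{\mg}(\mathcal{V}_i,\mathcal{W}_j)$ over the algebraically closed field $\C$ is exactly the standard route one would find in that reference. The only cosmetic discrepancy is that the lemma regards $\varphi_{ij}$ as an operator $\mathcal{V}_i\to\mathcal{W}_j$ while you package it as $\iota_j^{\mathcal{W}}\circ\psi_{ij}\circ\pi_i^{\mathcal{V}}:\mathcal{V}\to\mathcal{W}$, and that the one-dimensionality of the Hom-spaces implicitly uses the finite-dimensionality of the irreducible summands (as in the paper's Lemma~2.2); both points are harmless in the context in which the lemma is applied.
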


\section{Conformal Galilei algebras}

The conformal Galilei algebras \(\mathfrak{cga}_\ell(d, \mathbb{R})\) are non-relativistic generalizations of the conformal Lie algebras \(\mathfrak{so}(d+1, 2, \mathbb{R})\),
classified by the pair \((d, \ell)\), where \(d \in \mathbb{N}\) denotes the spatial dimension and \(\ell \in \tfrac{1}{2} \mathbb{N}_0\) represents the spin.
These algebras are not semisimple, and they admit a one-dimensional central extension either when \(d \in \mathbb{N}\) and \(\ell\) is a half-integer, or when \(d = 2\) and \(\ell\) is a non-negative integer.

Let us consider the \((d+1)\)-dimensional space \(\mathbb{R}^{1,d}\), equipped with the standard metric of signature \((1,d)\). We denote the standard linear coordinates on \(\mathbb{R}^{1,d}\) by \((t, x_1, x_2, \dots, x_d)\), where \(t\) represents the time coordinate and \(x_1, x_2, \dots, x_d\) are the spatial coordinates. We now focus on the infinitesimal transformations of \(\mathbb{R}^{1,d}\), described by vector fields
\begin{align}\label{eq:cga relations}
\begin{gathered}
  H=\partial_t, \qquad D=-2t\partial_t-2\ell\sum_{i=1}^d x_i \partial_{x_i}, \qquad C=t^2\partial_t + 2\ell \sum_{i=1}^d tx_i\partial_{x_i} \\
  M_{i,j}=-x_i\partial_{x_j}+x_j \partial_{x_i}, \qquad P_{n,i}=(-t)^n\partial_{x_i},
\end{gathered}
\end{align}
where $i,j=1,2\dots,d$ and $n=0,1,\dots,2\ell$ with $\ell \in \frac{1}{2}\N_0$. Their linear span has real
dimension $\smash{\frac{d(d-1)}{2}}+(2\ell+1)d + 3$ and is closed under the Lie bracket of vector fields. Consequently, we get finite-dimensional real Lie algebra with non-trivial Lie brackets
\begin{align*}
[M_{i,j},M_{k,r}]&=-\delta_{i,k}M_{j,r}-\delta_{j,r}M_{i,k}+ \delta_{i,r}M_{j,k} + \delta_{j,k}M_{i,r}, \\
[D,H]&=2H,  \qquad \qquad [C,H]=D,  \qquad \qquad [D,C]=-2C,\\
  [M_{i,j},P_{n,k}]&=-\delta_{i,k}P_{n,j}+\delta_{j,k}P_{n,i},\\
  [H,P_{n,i}]&=-nP_{n-1,i}, \qquad [D,P_{n,i}]=2(\ell-n)P_{n,i}, \qquad [C,P_{n,i}]=(2\ell-n)P_{n+1,i},
\end{align*}
where $i,j=1,2\dots,d$ and $n=0,1,\dots,2\ell$. The Lie algebra $\mathfrak{cga}_\ell(d,\R)$ has a Lie subalgebra given by direct sum of $\mathfrak{sl}(2,\R) \simeq \mathfrak{so}(2,1,\R)$ generated by $C,D,H$ and $\mathfrak{so}(d,\R)$ generated by $M_{i,j}$ for $i,j=1,2,\dots,d$. The subspace generated by $P_{n,i}$ for $i=1,2,\dots,d$ and $n=0,1,\dots,2\ell$ forms an abelian ideal and we have $\mathfrak{cga}_\ell(d,\R) \simeq (\mathfrak{sl}(2,\R) \oplus \mathfrak{so}(d,\R)) \ltimes V_{2\ell \omega,\omega_1}$, where $V_{2\ell \omega,\omega_1}$ is the irreducible finite-dimensional $\mathfrak{sl}(2,\R) \oplus \mathfrak{so}(d,\R)$-module with highest weight $(2\ell \omega,\omega_1).$

The conformal Galilei algebras $\mathfrak{cga}_\ell(d,\R)$ admit two distinct types of central extensions according to the values of $d\in \N$ and $\ell\in \frac{1}{2}\N_0$. In particular, we have
\begin{enumerate}
  \item[1)] the mass central extension $\widehat{\mathfrak{cga}}_\ell(d,\R)$ for $d \in \N$ and $\ell \in \frac{1}{2}+\N_0$,
  \begin{align}\label{3.2}
    [P_{m,i},P_{n,j}]=\delta_{i,j}\delta_{m+n,2\ell}b_m M, \qquad b_m=(-1)^{m+\ell+\frac{1}{2}}(2\ell-m)!m!,
  \end{align}
  \item[2)] the exotic central extension $\widetilde{\mathfrak{cga}}_\ell(d,\R)$ for $d=2$ and $\ell \in \N_0$,
  \begin{align}
    [P_{m,i},P_{n,j}]=\varepsilon_{i,j}\delta_{m+n,2\ell}q_m \Theta, \qquad q_m=(-1)^m(2\ell-m)!m!,
  \end{align}
  where $\varepsilon_{1,2}=-\varepsilon_{2,1}=1$ and $\varepsilon_{1,1}=\varepsilon_{2,2}=0$.
\end{enumerate}

In the rest of the article we shall consider the complexification of the real conformal Galilei algebras
$\mathfrak{cga}_\ell(d,\C).$
The expression $\langle x_1, \dots, x_n \rangle$ denotes the linear subspace of the Lie algebra $\mathfrak{g}$ generated by the elements $x_1, \dots, x_n \in \mathfrak{g}$; that is, the smallest vector subspace of $\mathfrak{g}$ containing all of these elements.

\section{The second adjoint cohomology space of $\mathfrak{cga}_\ell(d,\C)$}

In this section, we present the results regarding the second adjoint cohomology of the complex conformal Galilei algebra for $l\in\frac{1}{2}+\N_0.$ Namely, we show that the second adjoint cohomology vanishes for $l\in\frac{1}{2}+\N_0, \ d\neq2$ by applying Hochschild-Serre factorization theorem. For convenience, let us use the denotations $\mg=\mathfrak{cga}_\ell(d,\C), \ \ms=\mathfrak{sl}_{2}(\C) \oplus \mathfrak{so}_d(\C)$ and $V=V_{2\ell \omega,\omega_1}.$

\subsection{The case $d\geq3$ and $l\in\N_0$}

First, we state the main theorem and then we prove several auxiliary lemmas. At the end of  the section, we present the proof of the main theorem.

\begin{thm}{\label{mainthm}} Let $\mg=\mathfrak{cga}_\ell(d,\C)$ and $d\geq3.$ Then we have
\begin{align*}
H^2(\mg,\mg)&=0, \ \mbox{ if } \ l\in\frac{1}{2}+\N_0,\\
H^2(\mg,\mg)&\cong\C, \ \mbox{ if } \ l\in\N_0.
\end{align*}
\end{thm}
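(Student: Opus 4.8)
The plan is to apply the Hochschild--Serre factorization theorem to reduce the computation to $\ms$-invariant cohomology of the abelian radical $V$, and then to resolve that using Lemma~\ref{2.1} together with Schur-type arguments. Writing $\mg = \ms \ltimes V$ with $\ms = \Sl_2(\C)\oplus\so_d(\C)$ semisimple, the theorem gives
\[
H^2(\mg,\mg) \cong \bigoplus_{m+n=2} H^m(\ms,\C)\otimes H^n(V,\mg)^{\ms}.
\]
Since $\ms$ is semisimple, Whitehead's lemmas give $H^1(\ms,\C)=0$, and $H^2(\ms,\C)=0$ as well, so the only surviving term is $m=0$, $n=2$, leaving $H^2(\mg,\mg)\cong H^2(V,\mg)^{\ms}$. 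Thus the entire problem is to compute the $\ms$-invariant part of the second cohomology of the abelian Lie algebra $V$ with coefficients in the adjoint module $\mg = \ms\oplus V$.

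Next I would feed this into the long exact sequence of Lemma~\ref{2.1}, which relates $H^n(V,V)^{\ms}$, $H^n(V,\mg)^{\ms}$ and $H^n(V,\mg/V)^{\ms}$. Because $V$ is abelian and acts trivially on all three coefficient modules, each $H^n(V,M)^{\ms}$ reduces to $\bigl(\wedge^n V^* \otimes M\bigr)^{\ms}$, i.e. to $\ms$-module homomorphisms, which by the generalized Schur lemma (the third recalled lemma) are controlled entirely by counting isomorphic irreducible constituents in the decompositions of $\wedge^n V^*\otimes M$. Concretely I would decompose $\wedge^0 V^*$, $\wedge^1 V^*$, $\wedge^2 V^*$ as $\ms$-modules, and identify which irreducibles match those appearing in $V$ (for the $H^n(V,V)^{\ms}$ terms) and in $\mg/V\cong\ms$ (for the $H^n(V,\mg/V)^{\ms}$ terms, which are the adjoint representation of $\ms$). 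The discrepancy between the half-integer and integer cases should enter precisely here: the highest weight $(2\ell\omega,\omega_1)$ has $2\ell$ odd or even, which changes the parity structure of the tensor/exterior powers and hence the multiplicities of matching constituents.

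The concrete computation I expect to carry out is to chase the relevant portion of the long exact sequence near $n=2$, using the connecting homomorphisms and the already-computed low-degree terms $H^0$ and $H^1$ to pin down $\dim H^2(V,\mg)^{\ms}$. The $\Sl_2$-factor decomposition of symmetric/exterior powers of the $(2\ell+1)$-dimensional representation, tensored with the $\so_d$-vector representation $\omega_1$, will dictate the branching. The single $\C$ appearing in the integer case should arise from one extra $\ms$-invariant cocycle that is absent in the half-integer case, and I would verify it is not a coboundary by checking exactness at the relevant node of the sequence.

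The hard part will be the explicit $\ms$-module decomposition of $\wedge^2 V^*\otimes \mg$ and matching its irreducible constituents, since $V$ is a tensor product of an $\Sl_2$-irreducible with the $\so_d$-standard representation, so $\wedge^2 V$ does not factor as a simple tensor and one must use the Cauchy-type decomposition $\wedge^2(A\otimes B)\cong (\mathrm{Sym}^2 A\otimes \wedge^2 B)\oplus(\wedge^2 A\otimes \mathrm{Sym}^2 B)$ and then branch each factor. Keeping track of which constituents coincide with those of the adjoint module $\ms$ (governing the $H^2(V,\mg/V)^{\ms}$ term and thus the image of the connecting map) is where the dimension count is delicate, and where the $d\neq 2$ hypothesis must be used to rule out the accidental isomorphisms that would otherwise enlarge the invariant space.
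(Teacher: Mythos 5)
Your reduction via Hochschild--Serre to $H^2(V,\mg)^{\ms}$ and the plan to feed this into the long exact sequence of Lemma~\ref{2.1} match the paper's overall skeleton. But there is a genuine error immediately afterwards: $V$ does \emph{not} act trivially on the adjoint module $\mg$. For $v\in V$ and $x\in\ms$ one has $v.x=[v,x]\in V$, which is generically nonzero (e.g.\ $[P_{n,i},H]=nP_{n-1,i}$). Only the two \emph{outer} coefficient modules $V$ and $\mg/V$ are trivial $V$-modules; that is precisely why the sequence $0\to V\to\mg\to\mg/V\to0$ is a nontrivial extension of $V$-modules and why the long exact sequence is needed at all. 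Consequently $H^2(V,\mg)^{\ms}$ is not $\mathrm{Hom}_{\ms}(\wedge^2V,\mg)$, the coboundary space $B^2(V,\mg)^{\ms}$ does not vanish for free (the paper has to prove this in Lemma~\ref{mainlem1} via Schur's lemma), and the $\mg$-valued cocycle condition contains the terms $[a,\varphi(b,c)]$ with $a\in V$, $\varphi(b,c)\in\ms$, which is where all the work in Lemmas~\ref{mainlem2} and~\ref{mainlem3} happens.

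This is not a cosmetic slip, because the bare multiplicity count you propose over-counts. For integer $\ell\geq1$ and $d\geq3$, the Cauchy decomposition $\wedge^2(V_{2\ell}\otimes\C^d)\cong(\mathrm{Sym}^2V_{2\ell}\otimes\wedge^2\C^d)\oplus(\wedge^2V_{2\ell}\otimes\mathrm{Sym}^2\C^d)$ produces \emph{two} copies of constituents of $\ms$: the $\so_d$-adjoint $\wedge^2\C^d$ paired with the invariant form inside $\mathrm{Sym}^2V_{2\ell}$, and the $\Sl_2$-adjoint $V_2\subseteq\wedge^2V_{2\ell}$ paired with the quadratic form inside $\mathrm{Sym}^2\C^d$. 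Hence $\mathrm{Hom}_{\ms}(\wedge^2V,\ms)\cong\C^2$, and identifying this with (a summand of) $H^2(V,\mg)^{\ms}$ would yield $H^2(\mg,\mg)\cong\C^2$, contradicting the theorem. The second, $\Sl_2$-valued class, say $\varphi(P_{i,j},P_{r,t})=\delta_{j,t}\,\psi(P_i\wedge P_r)$ with $\psi\colon\wedge^2V_{2\ell}\to V_2\cong\Sl_2$ the equivariant projection, is a perfectly good $\ms$-invariant cochain but fails the $\mg$-valued cocycle condition: already $d^2\varphi(P_{0,j},P_{1,j},P_{2,t})=[P_{2,t},\varphi(P_{0,j},P_{1,j})]\neq0$ for $t\neq j$, since $\varphi(P_{0,j},P_{1,j})$ is proportional to $H$ and $[P_{2,t},H]\neq0$. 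In the language of the long exact sequence this says the connecting map $H^2(V,\mg/V)^{\ms}\to H^3(V,V)^{\ms}$ is nonzero, so ``checking exactness at the relevant node'' is not a computation --- you must actually evaluate the connecting homomorphisms, and your plan does not say how. Either do that, or follow the paper and compute $Z^2(V,\mg)^{\ms}$ and $B^2(V,\mg)^{\ms}$ directly, using the full adjoint action of $V$ on $\mg$; the representation-theoretic multiplicity count is only valid verbatim for the outer terms $H^n(V,V)^{\ms}$ and $H^n(V,\mg/V)^{\ms}$.
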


At first, we prove that the space of $\ms$-invariant coboundaries vanishes.

\begin{lem}{\label{mainlem1}}
Let $\mg=\mathfrak{cga}_\ell(d,\C)$ and $d\geq3.$
$ B^2(V,\mg)^{\ms}=0.$
\end{lem}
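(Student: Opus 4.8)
The plan is to use that $V$ is an abelian ideal and that, as $\ms$-modules, $\mg=\ms\oplus V$, so that a $2$-coboundary on $V$ is determined by its underlying $1$-cochain in a very rigid way. For $\varphi\in C^1(V,\mg)={\rm Hom}(V,\mg)$ one has, since $[V,V]=0$,
\[
(d^1\varphi)(v_1,v_2)=[v_1,\varphi(v_2)]-[v_2,\varphi(v_1)]-\varphi([v_1,v_2])=[v_1,\varphi(v_2)]-[v_2,\varphi(v_1)].
\]
The map $d^1\colon C^1(V,\mg)\to C^2(V,\mg)$ is $\ms$-equivariant, and since $\ms$ is semisimple the functor of $\ms$-invariants is exact on finite-dimensional modules. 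Applying it to the surjection $d^1\colon C^1(V,\mg)\twoheadrightarrow B^2(V,\mg)$ yields
\[
B^2(V,\mg)^{\ms}=d^1\bigl(C^1(V,\mg)^{\ms}\bigr)=d^1\bigl({\rm Hom}_{\ms}(V,\mg)\bigr),
\]
so it is enough to show that $d^1$ kills every $\ms$-equivariant map $V\to\mg$.

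Next I would use the decomposition ${\rm Hom}_{\ms}(V,\mg)={\rm Hom}_{\ms}(V,V)\oplus{\rm Hom}_{\ms}(V,\ms)$ coming from $\mg=V\oplus\ms$. Any $\varphi\in{\rm Hom}_{\ms}(V,V)$ has image in the abelian ideal $V$, so $[v_i,\varphi(v_j)]=0$ and hence $d^1\varphi=0$ outright (Schur's lemma even shows such a $\varphi$ is a scalar, but this is not needed here). Consequently
\[
B^2(V,\mg)^{\ms}=d^1\bigl({\rm Hom}_{\ms}(V,\ms)\bigr),
\]
and the whole statement reduces to proving ${\rm Hom}_{\ms}(V,\ms)=0$.

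The crux, and the only place where $d\geq3$ enters, is this last vanishing, which I would deduce from the generalized Schur's lemma: ${\rm Hom}_{\ms}(V,\ms)$ is nonzero if and only if the irreducible $\ms$-module $V=V_{2\ell\omega,\omega_1}$ is isomorphic to an irreducible constituent of the adjoint module $\ms$. The constituents of $\ms$ are the adjoint module of $\Sl_2(\C)$, on which $\so_d(\C)$ acts trivially, and the irreducible pieces of the adjoint module $\wedge^2\C^d$ of $\so_d(\C)$, on which $\Sl_2(\C)$ acts trivially. Since the $\so_d(\C)$-part of $V$ is the nontrivial standard weight $\omega_1$, it cannot match the first; matching the second would force the $\Sl_2(\C)$-action on $V$ to be trivial, i.e. $\ell=0$, together with $\C^d$ occurring in $\wedge^2\C^d$ as an $\so_d(\C)$-module, which by a dimension and weight count happens only at $d=3$ and is excluded from the range considered. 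Hence no constituent of $\ms$ is isomorphic to $V$, giving ${\rm Hom}_{\ms}(V,\ms)=0$ and therefore $B^2(V,\mg)^{\ms}=0$. I expect this comparison of $\so_d(\C)$-modules---in particular ruling out the accidental isomorphism $\C^d\cong\wedge^2\C^d$ at $d=3$---to be the main technical point.
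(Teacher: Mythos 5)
Your argument takes a genuinely different and more conceptual route than the paper's. The paper starts from an $\ms$-invariant $1$-cochain $\omega$ and pins down each value $\omega(P_{i,j})$ by hand: the ${\rm ad}_D$-eigenvalue forces $\omega(P_{i,j})$ into an explicit weight space, and invariance under $H$, $C$ and the $M_{p,q}$ then forces $\omega(V)\subseteq V$, after which Schur's lemma gives $\omega=\alpha\,{\rm Id}_V$ and $d^1\omega=0$ because $V$ is abelian. You reach the same reduction --- any invariant $1$-cochain with nonzero coboundary would have to have a component in ${\rm Hom}_{\ms}(V,\ms)$ --- but by decomposing $\mg=V\oplus\ms$ as an $\ms$-module and matching irreducible constituents via the generalized Schur lemma rather than by weight computations. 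Your explicit appeal to exactness of the invariants functor to get $B^2(V,\mg)^{\ms}=d^1\bigl(C^1(V,\mg)^{\ms}\bigr)$ is a step the paper leaves implicit (it is the same fact underlying \eqref{iso23}), and your route has the advantage of making the exceptional isomorphisms visible.

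That visibility, however, exposes a real problem: the case $(d,\ell)=(3,0)$, which you dismiss as ``excluded from the range considered,'' is not excluded --- the lemma is stated for all $d\geq 3$ with no restriction on $\ell$, and the enclosing subsection explicitly covers $\ell\in\N_0$. In that case $V=\C^3$ carries the trivial $\Sl(2,\C)$-action, so $V\cong\so(3,\C)$ as $\ms$-modules and ${\rm Hom}_{\ms}(V,\ms)\neq 0$; moreover the resulting coboundary does not vanish. Concretely, the equivariant map $\psi$ with $\psi(P_{0,1})=M_{2,3}$, $\psi(P_{0,2})=-M_{1,3}$, $\psi(P_{0,3})=M_{1,2}$ satisfies $(d^1\psi)(P_{0,1},P_{0,2})=[P_{0,1},-M_{1,3}]-[P_{0,2},M_{2,3}]=-2P_{0,3}\neq 0$, so $B^2(V,\mg)^{\ms}\neq 0$ for $\mathfrak{cga}_0(3,\C)$ and the lemma as stated fails there. (The paper's own proof has the same blind spot: for $\ell=0$ the invariance under $H$ and $C$ imposes no constraint on the $\so(d,\C)$-component of $\omega(P_{0,j})$, since $[H,P_{0,j}]=[C,P_{0,j}]=0$.) Your argument is complete and correct for all $d\geq 3$ with $(d,\ell)\neq(3,0)$; to match the stated lemma you would either have to treat $(3,0)$ separately --- which is impossible, since the conclusion is false there --- or record that the statement requires $(d,\ell)\neq(3,0)$, e.g.\ $\ell\geq\tfrac12$, which is all that the applications elsewhere in the paper actually use.
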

\begin{proof}
First, we show that $\omega(P_{i,j})\in V, \ i\in\{0,1,\dots,2l\}, \ j\in\{1,2,\dots,d\}.$ Let $\omega$ be an $\ms$-invariant 1-cochain. Then the invariance with respect to $D\in\Sl(2,\C)$ leads to
\begin{align*}
(D.\omega)(P_{i,j})=[D,\omega(P_{i,j})]-\omega([D,P_{i,j}])=[D,\omega(P_{i,j})]-2(l-i)\omega(P_{i,j})=0.
\end{align*}
So, $\omega(P_{i,j})$ is an eigenvector of ${\rm ad}_D$ with eigenvalue $2(l-i).$ Thus, we have the following relations:
\begin{align*}
\omega(P_{l-1,j})&\in\langle H,P_{l-1,j} \ | \ j=1,2,\dots,d \rangle, &  \omega(P_{l+1,j})&\in\langle C,P_{l+1,j} \ | \ j=1,2,\dots,d \rangle,\\
\omega(P_{l,j})&\in\langle M_{p,q},P_{l,j} \ | \ p,q,j=1,2,\dots,d \rangle, & \omega(P_{i,j})&\in V, \quad i\neq l-1,l,l+1.
\end{align*}
Moreover, the invariance relations $H.\omega=0$ imply that
\begin{align*}
(H.\omega)(P_{l,j})&=[H,\omega(P_{l,j})]-\omega([H,P_{l,j}])=[H,\omega(P_{l,j})]+l\omega(P_{l-1,j})=0.
\end{align*}
Hence, $\omega(P_{l-1,j})\in\langle P_{l-1,j} \ | \ j=1,2,\dots,d\rangle.$ Similarly, the invariance with respect to $C\in\Sl(2,\C)$ follows that $\omega(P_{l+1,j})\in\langle P_{l+1,j} \ | \ j=1,2,\dots,d\rangle$ and $\omega(P_{l,j})\in\langle P_{l,j} \ | \ j=1,2,\dots,d\rangle.$ Thus, we have $\omega:V\to V$ and $x.\omega=0$ for $x\in\ms,$ that is,
$$(x.\omega)(v)=[x,\omega(v)]-\omega([x,v])=0, \quad v\in V.$$
This means that $\omega$ is an $\ms$-module homomorphism. Then by Schur's lemma we have
$$\omega=\alpha{\rm Id}_{|V}, \quad \alpha\in\C.$$
Since $V$ is abelian, ${\rm Id}_{V}$ is a derivation of $V.$ Hence, $B^2(V,\mathfrak{cga}_\ell(d,\C))^{\ms}=0.$
\end{proof}

\begin{lem}{\label{mainlem2}}
Let $d\geq3.$ Then we have
\begin{align*}
Z^2(V,\mathfrak{so}_d(\C))^{\ms}&=0, \quad \mbox{ if } \ l\in\frac{1}{2}+\N_0,\\
Z^2(V,\mathfrak{so}_d(\C))^{\ms}&\cong\C, \quad \mbox{ if } \ l\in\N_0.
\end{align*}
%$Z^2(V,\mathfrak{so}_d(\C))^{\ms}=0.$
\end{lem}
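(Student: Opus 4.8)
The plan is to reduce the statement to a purely representation-theoretic multiplicity count. Since $V$ is an abelian Lie algebra and $\so_d(\C)$, being a direct summand of the trivial $V$-module $\mg/V\cong\ms$, carries the trivial $V$-action, the Chevalley--Eilenberg differential in \eqref{eq1} vanishes identically on the complex $C^\bullet(V,\so_d(\C))$. Hence every $2$-cochain is automatically a cocycle, and taking $\ms$-invariants gives
$$Z^2(V,\so_d(\C))^{\ms}=C^2(V,\so_d(\C))^{\ms}={\rm Hom}(\wedge^2 V,\so_d(\C))^{\ms}={\rm Hom}_{\ms}(\wedge^2 V,\so_d(\C)),$$
so it remains to compute the space of $\ms$-module homomorphisms from $\wedge^2 V$ into the adjoint module $\so_d(\C)$.

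Next I would fix the module structures. Writing $W$ for the $(2l+1)$-dimensional irreducible $\Sl(2,\C)$-module and $U$ for the standard $d$-dimensional $\so_d(\C)$-module, one has $V\cong W\otimes U$ as an $\ms$-module, with $\Sl(2,\C)$ acting on the first factor and $\so_d(\C)$ on the second; moreover $\so_d(\C)\cong\wedge^2 U$ as an $\so_d(\C)$-module, with $\Sl(2,\C)$ acting trivially. The Cauchy decomposition of the exterior square of a tensor product then yields the $\ms$-module splitting
$$\wedge^2 V\cong\big(S^2 W\otimes\wedge^2 U\big)\oplus\big(\wedge^2 W\otimes S^2 U\big).$$
By the generalized Schur lemma recalled in Section~2, a homomorphism from either summand into $\so_d(\C)\cong\C\otimes\wedge^2 U$ can be nonzero only between matching irreducible tensor factors on each side, so $\dim{\rm Hom}_{\ms}(\wedge^2 V,\so_d(\C))$ is the sum over the two summands of the product of the corresponding $\Sl(2,\C)$- and $\so_d(\C)$-multiplicities.

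I would then evaluate the two factors separately. On the $\so_d(\C)$ side, $S^2 U\cong S^2_0 U\oplus\C$ contains no copy of the adjoint $\wedge^2 U$, so the second summand $\wedge^2 W\otimes S^2 U$ contributes nothing; on the first summand the relevant factor is ${\rm Hom}_{\so_d(\C)}(\wedge^2 U,\wedge^2 U)$, which is one-dimensional whenever $\wedge^2 U$ is the irreducible adjoint module. On the $\Sl(2,\C)$ side, the Clebsch--Gordan rule decomposes $S^2 W$ into irreducibles of highest weights $4l,4l-4,\dots$ and $\wedge^2 W$ into those of highest weights $4l-2,4l-6,\dots$; hence the trivial $\Sl(2,\C)$-module occurs, with multiplicity one, in $S^2 W$ exactly when $l\in\N_0$ and in $\wedge^2 W$ exactly when $l\in\tfrac12+\N_0$. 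Combining these, only the first summand can produce an invariant homomorphism, and it does so precisely when $l\in\N_0$, giving dimension $1$; for $l\in\tfrac12+\N_0$ the count is $0$. This yields the asserted values $Z^2(V,\so_d(\C))^{\ms}\cong\C$ and $0$, respectively.

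The principal obstacle is the parity bookkeeping in the $\Sl(2,\C)$-plethysm: it is exactly the alternation between $S^2 W$ and $\wedge^2 W$ as to where the trivial summand lands that produces the dichotomy between integer and half-integer $l$, and this must be matched correctly against the symmetric and exterior factors coming from the Cauchy decomposition. A secondary point requiring separate inspection is the low-rank behavior of $\so_d(\C)$, since the identification $\so_d(\C)\cong\wedge^2 U$ is irreducible for $d=3$ and $d\ge 5$ but reducible for $d=4$, where $\so_4(\C)\cong\Sl(2,\C)\oplus\Sl(2,\C)$; the endomorphism multiplicity of the adjoint module in that exceptional case should be checked directly rather than assumed equal to one.
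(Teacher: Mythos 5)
Your argument takes a genuinely different route from the paper's. The paper works entirely with weight vectors: $D$-invariance forces $\varphi(P_{i,j},P_{r,t})$ to be supported on $i+r=2l$, then invariance under the generators $M_{t,k}$ and, crucially, the cocycle identity $d^2\varphi(P_{i,1},P_{i,2},P_{r,2})=0$ (evaluated with the adjoint bracket, via ``$[P_{i,2},M_{1,2}]\neq0$'') are used to kill the surviving components in the half-integer case, with separate treatments for $d=3$, $d=4$ and $d\geq5$. Your plethysm count $\wedge^2(W\otimes U)\cong(S^2W\otimes\wedge^2U)\oplus(\wedge^2W\otimes S^2U)$ together with the Clebsch--Gordan parity observation replaces all of that at once in the half-integer case, and there your conclusion is unconditionally correct: the ambient space ${\rm Hom}_{\ms}(\wedge^2V,\so_d(\C))$ is already zero, so it is irrelevant which differential one imposes on top of it. This is cleaner than the paper's case-by-case analysis.

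There are, however, two points where the proposal does not close. First, your opening premise --- that the differential on $C^\bullet(V,\so_d(\C))$ vanishes identically --- holds only if $\so_d(\C)$ is read as a trivial $V$-module, i.e.\ as a summand of $\mg/V$. The paper does not read it that way: in its proof the cocycle condition is imposed using the bracket $[V,\so_d(\C)]\subseteq V$ inside $\mg$, which is a genuine extra constraint (this is exactly how the paper eliminates $\varphi(P_{i,1},P_{r,2})\in\langle M_{1,2}\rangle$ for $d=3,4$). So for integer $l$ you and the paper are a priori computing different spaces, and your identification $Z^2={\rm Hom}_{\ms}(\wedge^2V,\so_d(\C))$ needs to be justified against the way the lemma is actually consumed in Lemma~\ref{mainlem3} and Theorem~\ref{mainthm}. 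Second, and more concretely, the $d=4$ case with $l\in\N_0$ is a genuine gap, not merely a point to ``check directly'': since $\wedge^2\C^4$ splits into two irreducible $\so_4(\C)$-summands, ${\rm Hom}_{\so_4(\C)}(\wedge^2U,\wedge^2U)\cong\C^2$ (spanned by the identity and the Hodge star), so your method yields $\C^2$ rather than the claimed $\C$, and within the trivial-action framework there is no further condition available to cut this down. Thus the proposal proves the half-integer statement for all $d\geq3$ and the integer statement for $d=3$ and $d\geq5$, but does not establish the integer case for $d=4$; closing it requires either the additional cocycle constraints the paper uses or a separate argument eliminating the Hodge-star component.
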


\begin{proof}
Let $\varphi\in Z^2(V,\mathfrak{so}_d(\C))^{\ms}.$ Then $(x.\varphi)(a,b)=0$ for $x\in\Sl_2(\C), \ a,b\in V.$ By the invariance of $\varphi$ with respect to $D\in\Sl(2,\C),$ we have
\begin{align}\label{Dphi}
\nonumber
(D.\varphi)(P_{i,j},P_{r,t})&=[D,\varphi(P_{i,j},P_{r,t})]-\varphi([D,P_{i,j}],P_{r,t})-\varphi(P_{i,j},[D,P_{r,t}])\\
                            &=[D,\varphi(P_{i,j},P_{r,t})]-2(2l-i-r)\varphi(P_{i,j},P_{r,t})=0.
\end{align}
From this equation we obtain that $\varphi(P_{i,j},P_{r,t})$ can survive when and only when $i+r=2l.$

{\bf Case 1.} Let $l\in\frac{1}{2}+\N_0.$ Then we fix that $i+r=2l.$ First of all, we show that $\varphi(P_{i,j},P_{r,j})=0, \ j=1,2,\dots,d.$ Set
$$\varphi(P_{i,j},P_{r,j})=\sum_{m,n}^{}\alpha_{m,n}M_{m,n}.$$
We first consider the invariance with respect to $M_{t,k}\in\so(d,\C),$ $t\neq j, \ k\neq j.$ Then we have
\begin{align*}
(M_{t,k}.\varphi)(P_{i,j},P_{r,j})&=[M_{t,k},\varphi(P_{i,j},P_{r,j})]-\varphi([M_{t,k},P_{i,j}],P_{r,j})
-\varphi(P_{i,j},[M_{t,k},P_{r,j}])\\
&=\sum_{m,n}^{}\alpha_{m,n}[M_{t,k},M_{m,n}]=0.
\end{align*}
Using the simplicity of $\so(d,\C),$ for any pair $(m,n)$ there exists at least one pair $(t,k)$ such that the commutator $[M_{t,k},M_{m,n}]$ does not vanish. Letting the indices run through $\{1,2,\dots,d\},$ we obtain that
$$\varphi(P_{i,j},P_{r,j})=0, \quad 1\leq j\leq d.$$

To prove the remaining part, we first show that $\varphi(P_{i,1},P_{r,2})=0.$ Then we demonstrate that for all $j,k\in\{1,2,\dots,d\},$ $\varphi(P_{i,j},P_{r,k})$ is in the orbit of the $\varphi(P_{i,2},P_{r,2})$ under the action of $\so(d,\C).$
The proof of $\varphi(P_{i,1},P_{r,2})=0$ is divided into three cases, since the proof of general case differs from the cases $d=3$ and $d=4.$
\begin{itemize}
\item Let $d=3.$ Considering the invariance of $\varphi$ with respect to $M_{2,3}\in\mathfrak{so}(3,\C)$ we have
    \begin{align*}
    (M_{2,3}.\varphi)(P_{i,1},P_{r,3})&=[M_{2,3},\varphi(P_{i,1},P_{r,3})]-\varphi([M_{2,3},P_{i,1}],P_{r,3})-\varphi(P_{i,1},[M_{2,3},P_{r,3}])\\
    &=[M_{2,3},\varphi(P_{i,1},P_{r,3})]-\varphi(P_{i,1},P_{r,2})=0.
    \end{align*}
    Applying the same arguments on the elements $M_{1,3}\in\mathfrak{so}_3$ and $\varphi(P_{i,3},P_{r,2}),$ we obtain
    $$\varphi(P_{i,1},P_{r,2})=[M_{1,3},\varphi(P_{i,3},P_{r,2})].$$
This together with the previous relation give us $\varphi(P_{i,1},P_{r,2})\in\langle M_{1,2}\rangle.$  On the other hand, we have $\varphi\in Z^2(V,\so(3,\C))$ which follows
    \begin{align*}
    d^2\varphi(P_{i,1},P_{i,2},P_{r,2})&=[P_{i,1},\varphi(P_{i,2},P_{r,2})]+[P_{i,2},\varphi(P_{r,2},P_{i,1})]+[P_{r,2},\varphi(P_{i,1},P_{i,2})]\\
    &+\varphi(P_{i,1},[P_{i,2},P_{r,2}])+\varphi(P_{i,2},[P_{r,2},P_{i,1}])+\varphi(P_{r,2},[P_{i,1},P_{i,2}])\\
    &=[P_{i,2},\varphi(P_{r,2},P_{i,1})]=0.
    \end{align*}
Since $[P_{i,2},M_{1,2}]\neq0,$ we get that $\varphi(P_{i,1},P_{r,2})=0.$

\item Let $d=4.$ In a similar way with the previous case we consider the invariance of $\varphi$ with respect to the elements $M_{2,3},M_{1,4},M_{2,4}\in\so(d,\C)$ and obtain that $\varphi(P_{i,1},P_{r,2})\in\langle M_{1,2},M_{3,4}\rangle$ and
 $$\varphi(P_{i,1},P_{r,2})=[M_{2,3},\varphi(P_{i,1},P_{r,3})].$$
 Similarly, one can show that $\varphi(P_{i,1},P_{r,3})\in\langle M_{1,3},M_{2,4}\rangle.$ Next, we consider the relation $d^2\varphi(P_{r,3},P_{i,1},P_{r,2})=0.$ Then, applying the substitution $\varphi(P_{i,1},P_{r,2})=[M_{2,3},\varphi(P_{i,1},P_{r,3})],$ we have
 \begin{align*}
    %d^2\varphi(P_{r,3},P_{i,1},P_{r,2})&=[P_{r,3},\varphi(P_{i,1},P_{r,2})]+[P_{i,1},\varphi(P_{r,2},P_{r,3})]+[P_{r,2},\varphi(P_{r,3},P_{i,1})]\\
    %&+\varphi(P_{r,3},[P_{i,1},P_{r,2}])+\varphi(P_{i,1},[P_{r,2},P_{r,3}])+\varphi(P_{r,2},[P_{r,3},P_{i,1}])\\
    &[P_{r,3},[M_{2,3},\varphi(P_{i,1},P_{r,3})]]+[P_{r,2},\varphi(P_{r,3},P_{i,1})]=2[P_{r,2},\varphi(P_{r,3},P_{i,1})]=0.
 \end{align*}
The last equality relies on the Jacobi identity and the relation $[M_{2,3},[P_{r,3},\varphi(P_{i,1},P_{r,3})]]=0.$ This implies that $\varphi(P_{i,1},P_{r,3})\in\langle M_{1,3}\rangle$ and $\varphi(P_{i,1},P_{r,2})\in\langle M_{1,2}\rangle.$
Now, considering the cocycle condition for the elements $P_{i,1},P_{i,2},P_{r,2}\in V,$ we have
 \begin{align*}
    &[P_{i,1},\varphi(P_{i,2},P_{r,2})]+[P_{i,2},\varphi(P_{r,2},P_{i,1})]+[P_{r,2},\varphi(P_{i,1},P_{i,2})]\\
    &+\varphi(P_{i,1},[P_{i,2},P_{r,2}])+\varphi(P_{i,2},[P_{r,2},P_{i,1}])+\varphi(P_{r,2},[P_{i,1},P_{i,2}])=[P_{i,2},\varphi(P_{r,2},P_{i,1})]=0.
 \end{align*}
Since $[P_{i,2},M_{1,2}]\neq0,$ we get that $\varphi(P_{i,1},P_{r,2})=0.$

\item Let $d\geq5.$ Assume that
$$\varphi(P_{i,1},P_{r,2})=\sum_{m,n}^{}\alpha_{m,n}P_{m,n}.$$
Then we consider the invariance of $\varphi$ with respect to $M_{t,k}\in\mathfrak{so}(3,\C),$ $t\neq1,2, \ k\neq1,2.$
\begin{align*}
(M_{t,k}.\varphi)(P_{i,1},P_{r,2})&=[M_{t,k},\varphi(P_{i,1},P_{r,2})]-\varphi([M_{t,k},P_{i,1}],P_{r,2})
-\varphi(P_{i,1},[M_{t,k},P_{r,2}])\\
&=\sum_{m,n}^{}\alpha_{m,n}[M_{t,k},M_{m,n}]=0.
\end{align*}
By the simplicity of $\so(d,\C),$ for any pair $(m,n)$ there exists at least one pair $(t,k), \ t\neq1,2, \ k\neq1,2$ such that the commutator $[M_{t,k},M_{m,n}]$ does not vanish. Letting the indices run through $\{1,2,\dots,d\},$ we obtain that
$$\varphi(P_{i,1},P_{r,2})\in\langle M_{1,2}\rangle.$$
The remaining part of this case can be proved applying the same arguments as the last part of the previous case by considering the cocycle condition for the elements $P_{i,1},P_{i,2},P_{r,2}\in V.$ Thus, $\varphi(P_{i,1},P_{r,2})=0.$
\end{itemize}

Now, we show that the elements $\varphi(P_{i,m},P_{r,n}), \ m\neq n$ are in the orbit of $\varphi(P_{i,1},P_{r,2})$ with respect to the $\so(d,\C)$-action. First, we consider the invariance of $\varphi$ with respect to $M_{m,1},M_{n,2}\in\so(d,\C).$
\begin{align*}
(M_{m,1}.\varphi)(P_{i,1},P_{r,2})&=[M_{m,1},\varphi(P_{i,1},P_{r,2})]-\varphi([M_{m,1},P_{i,1}],P_{r,2})-\varphi(P_{i,1},[M_{m,1},P_{r,2}])\\
&=-\varphi(P_{i,m},P_{r,2})=0.
\end{align*}
Similarly, by $(M_{n,2}.\varphi)(P_{i,m},P_{r,2})=0$ we have
\begin{align*}
&[M_{n,2},\varphi(P_{i,m},P_{r,2})]-\varphi([M_{n,2},P_{i,m}],P_{r,2})-\varphi(P_{i,m},[M_{n,2},P_{r,2}])=\varphi(P_{i,m},P_{r,n})=0.
\end{align*}
Hence, we obtain that $\varphi(P_{i,m},P_{r,n})=0,$ $m,n\in\{1,2,\dots,d\}.$ Thus, $Z^2(V,\mathfrak{so}_d(\C))^{\ms}=0.$

{\bf Case 2.} Let $l\in\N_0.$ Then applying the same arguments as the previous case, we obtain that
$\varphi(P_{i,m},P_{r,n})=0,$
where $\ i\neq r, \ i+r=2l, \  m,n\in\{1,2,\dots,d\}.$ Consider the case $i=r=l.$ Set
$$\varphi(P_{l,m},P_{l,n})=\sum_{p,q}^{}\alpha_{p,q}M_{p,q},$$
Then by applying the invariance condition $(M_{t,k}.\varphi)(P_{l,m},P_{r,n})=0, \ t\neq m,n, \ k\neq m,n,$ we have
\begin{align*}
&[M_{t,k},\varphi(P_{l,m},P_{l,n})]-\varphi([M_{t,k},P_{l,m}],P_{l,n})-\varphi(P_{l,m},[M_{t,k},P_{l,n}])
=\sum_{p,q}^{}\alpha_{p,q}[M_{t,k},M_{p,q}]=0.
\end{align*}
Again by the simplicity of $\so(d,\C),$ we derive that $\varphi(P_{l,m},P_{l,n})\in\langle M_{m,n} \rangle.$ It only remains to show that $Z^2(V,\so(d,\C))^{\ms}$ is one-dimensional. For this, set
$$\varphi(P_{l,m},P_{l,n})= M_{m,n}.$$
We consider the invariance of $\varphi$ with respect to the elements $M_{i,m},M_{j,n}\in\so(d,\C), \ i\neq n, \ j\neq m.$ So, the relation $(M_{i,m}.\varphi)(P_{l,m},P_{r,n})=0$ implies that
\begin{align*}
&[M_{i,m},\varphi(P_{l,m},P_{l,n})]-\varphi([M_{i,m},P_{l,m}],P_{l,n})-\varphi(P_{l,m},[M_{i,m},P_{l,n}])\\
        &=[M_{i,m},M_{m,n}]-\varphi(P_{l,i},P_{l,n})=0.
\end{align*}
Thus, $\varphi(P_{l,i},P_{l,n})= M_{i,n}.$ Similarly, the relation $(M_{j,n}.\varphi)(P_{l,i},P_{l,n})=0$ follows that
\begin{align}\label{dim1}
\varphi(P_{l,i},P_{l,j})= M_{i,j}.
\end{align}
Hence, $Z^2(V,\so(d,\C))^{\ms}$ is $1$-dimensional and it is generated by 2-cocycles defined as \eqref{dim1}.
\end{proof}

\begin{lem}{\label{mainlem3}}
Let $d\geq3.$ Then
\begin{align*}
Z^2(V,\ms)^{\ms}&=0, \quad \mbox{ if } \ l\in\frac{1}{2}+\N_0,\\
Z^2(V,\ms)^{\ms}&\cong\C, \quad \mbox{ if } \ l\in\N_0.
\end{align*}
\end{lem}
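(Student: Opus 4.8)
The plan is to split the target along the decomposition $\ms=\Sl_2(\C)\oplus\so_d(\C)$ into two non-isomorphic simple ideals with $[\Sl_2(\C),\so_d(\C)]=0$, so that every $\varphi\in Z^2(V,\ms)^{\ms}$ writes uniquely as $\varphi=\varphi_{\Sl}+\varphi_{\so}$ with $\varphi_{\Sl}$ valued in $\Sl_2(\C)$ and $\varphi_{\so}$ in $\so_d(\C)$. Since the adjoint action preserves each ideal and the cross-brackets vanish, the $\ms$-component of $(x.\varphi)$ and the $\so_d(\C)$-component separate, so $\ms$-invariance of $\varphi$ is equivalent to the separate $\ms$-invariance of $\varphi_{\Sl}$ and $\varphi_{\so}$; the $\so_d(\C)$-valued summand is then governed by Lemma \ref{mainlem2}. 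The whole problem thus reduces to controlling $\varphi_{\Sl}$ and, crucially, to understanding how the cocycle condition $d^2\varphi=0$ links the two summands.

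First I would analyze $\varphi_{\Sl}$ using invariance alone. As $\so_d(\C)$ acts trivially on $\Sl_2(\C)$, the generalized Schur lemma applied to the $\so_d(\C)$-decomposition $V=\bigoplus_{n=0}^{2\ell}\langle P_{n,1},\dots,P_{n,d}\rangle$ (each summand a copy of the standard module) forces $\varphi_{\Sl}(P_{i,j},P_{r,t})=\delta_{j,t}\,s_{i,r}$ with $s_{i,r}=-s_{r,i}\in\Sl_2(\C)$, by the same arguments used in the opening of Lemma \ref{mainlem2}. The residual $\Sl_2(\C)$-equivariance identifies the admissible $s_{i,r}$ with $\mathrm{Hom}_{\Sl_2(\C)}(\wedge^2 R_{2\ell},R_2)$, where $R_k$ is the irreducible $\Sl_2(\C)$-module of highest weight $k$ (so $V\cong R_{2\ell}\otimes\C^d$ with $\C^d$ acted on trivially, and $\Sl_2(\C)\cong R_2$). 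By the Clebsch--Gordan rule $\wedge^2 R_{2\ell}\cong\bigoplus_{k\geq 0}R_{4\ell-2-4k}$, the module $R_2$ occurs exactly when $4\ell-2-4k=2$ for some $k\in\N_0$, i.e. $k=\ell-1$, hence if and only if $\ell$ is a positive integer, and then with multiplicity one. So invariance already yields $\varphi_{\Sl}=0$ whenever $\ell\in\frac12+\N_0$, and leaves at most a one-dimensional family of candidates $\varphi_{\Sl}^{(0)}$ when $\ell\in\N_0$.

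Next I would impose the cocycle condition. For $\ell\in\frac12+\N_0$ the previous step gives $\varphi_{\Sl}=0$, so $d^2\varphi=d^2\varphi_{\so}=0$ is the pure $\so_d(\C)$-condition and Lemma \ref{mainlem2} forces $\varphi_{\so}=0$; hence $Z^2(V,\ms)^{\ms}=0$. For $\ell\in\N_0$ the $\so_d(\C)$-summand carries the one-dimensional invariant family spanned by $\varphi(P_{\ell,m},P_{\ell,n})=M_{m,n}$ of Lemma \ref{mainlem2}; together with the candidate $\varphi_{\Sl}^{(0)}$ the invariant space is (at most) two-dimensional for $\ell\geq 1$, while for $\ell=0$ the $\Sl_2(\C)$-part is absent and the $\so_d(\C)$-cocycle survives directly, giving $Z^2(V,\ms)^{\ms}\cong\C$. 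The essential point for $\ell\geq1$ is that the two summands do \emph{not} remain independent under $d^2$: the brackets $[P_{n,i},M_{j,k}]$ preserve the mode index $n$, whereas $[P_{n,i},H]$ and $[P_{n,i},C]$ shift $n$ to $n\mp1$, so $d^2\varphi_{\Sl}$ and $d^2\varphi_{\so}$ land in overlapping mode components of $V$ and interfere.

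Finally I would extract the relation by evaluating $d^2\varphi$ on judiciously chosen triples, for instance $(P_{a,j},P_{\ell,j},P_{\ell,\gamma})$ with $a\neq\ell$ and $\gamma\neq j$, for which most terms drop out and the surviving $\so_d(\C)$- and $\Sl_2(\C)$-contributions occupy the same component of $V$. Requiring their sum to vanish should produce exactly one nontrivial linear relation between the two scalars, cutting the two-dimensional invariant space down to one dimension and giving $Z^2(V,\ms)^{\ms}\cong\C$. The main obstacle is precisely this last step: one must show that $d^2$ imposes a \emph{single} relation, equivalently that neither pure summand is a cocycle while a unique combination is, since an over- or under-count here would change the final dimension. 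I expect to handle it by verifying that $d^2$ carries the two-dimensional invariant space onto a one-dimensional space of $\ms$-invariant $V$-valued $3$-cochains, so that its kernel is exactly one-dimensional.
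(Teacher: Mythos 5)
Your reduction to the two simple ideals of $\ms$ and your treatment of the half-integer case are correct, and in that case your argument is actually cleaner than the paper's: identifying the $\ms$-invariant $\Sl_2(\C)$-valued cochains with ${\rm Hom}_{\Sl_2(\C)}(\wedge^2R_{2\ell},R_2)$ (via the unique $\so_d(\C)$-invariant in $S^2\C^d$) and invoking $\wedge^2R_{2\ell}\cong R_{4\ell-2}\oplus R_{4\ell-6}\oplus\cdots$ disposes of the $\Sl_2(\C)$-valued part by pure representation theory, whereas the paper eliminates the $H$-, $D$- and $C$-valued components one at a time using $\so_d(\C)$-invariance, $C$-invariance, and explicit evaluations of $d^2\varphi$ on triples of the form $(P_{i,j},P_{r,j},P_{r,t})$. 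After that, Lemma \ref{mainlem2} finishes this case exactly as you say.

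The integer case, however, contains a genuine gap: the decisive claim that $d^2$ imposes \emph{exactly one} relation on the two-dimensional invariant space $\langle\varphi_{\Sl}^{(0)},\varphi_{\so}^{(0)}\rangle$ is left as an expectation, and when one carries out the computation it fails. Write $\varphi=\lambda\varphi_{\Sl}^{(0)}+\mu\varphi_{\so}^{(0)}$ with $\varphi_{\Sl}^{(0)}(P_{i,j},P_{r,t})=\delta_{j,t}s_{i,r}$, where $s_{i,r}=0$ unless $i+r\in\{2\ell-1,2\ell,2\ell+1\}$. For $\ell\geq2$ evaluate $d^2\varphi$ on $(P_{0,i},P_{\ell,i},P_{\ell,k})$ with $k\neq i$: since $s_{0,\ell}=s_{\ell,0}=0$ and $\varphi_{\so}^{(0)}$ vanishes off the pairs $(P_{\ell,\cdot},P_{\ell,\cdot})$, the only surviving term is $[P_{0,i},\mu M_{i,k}]=\mu P_{0,k}$, which forces $\mu=0$ on its own; the analogous triples with first entry $P_{\ell\pm1,i}$ then force $\lambda=0$. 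For $\ell=1$ the triple $(P_{0,i},P_{0,j},P_{2,j})$ with $j\neq i$ yields $-2\lambda h P_{0,i}$, where $s_{0,2}=hD$ with $h\neq0$, again forcing $\lambda=0$ and then $\mu=0$. So your mechanism produces at least two independent relations and a zero kernel, not the claimed copy of $\C$. Note that the paper reaches $\C$ by a different route: it argues that all $\Sl_2(\C)$-valued components of an invariant cocycle vanish and exhibits the generator as the purely $\so_d(\C)$-valued invariant of Lemma \ref{mainlem2}, without re-imposing the mixed condition $[P_{a,i},\varphi(\cdot,\cdot)]+\cdots=0$ on that generator (a condition that is vacuous if one regards the coefficients as the trivial $V$-module $\mg/V$, which is what the long exact sequence used in the proof of Theorem \ref{mainthm} actually calls for). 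As written, your final step cannot be completed as stated; you would need either to follow the paper's route or to settle which differential is being imposed on $\ms$-valued cochains before the integer case can be closed.
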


\begin{proof}
Take $\varphi\in Z^2(V,\ms)^{\ms}.$ By the invariance of $\varphi$ with respect to $D\in\Sl(2,\C),$ we have
\begin{align}\label{Pphi}
\nonumber
(D.\varphi)(P_{i,j},P_{r,t})&=[D,\varphi(P_{i,j},P_{r,t})]-\varphi([D,P_{i,j}],P_{r,t})-\varphi(P_{i,j},[D,P_{r,t}])\\
                            &=[D,\varphi(P_{i,j},P_{r,t})]-2(2l-i-r)\varphi(P_{i,j},P_{r,t})=0.
\end{align}

Thus, $\varphi(P_{i,j},P_{r,t})$ is an eigenvector of ${\rm ad}_{D}$ with eigenvalue $2(2l-i-r).$ Therefore, $\varphi(P_{i,j},P_{r,t})$ may not vanish in one of the following possible cases:
\begin{align*}
\varphi(P_{i,j},P_{r,t})&\in \langle H\rangle, && \mbox{ if } \ i+r=2l-1,\\
\varphi(P_{i,j},P_{r,t})&\in \langle D\rangle\oplus\so(d,\C), && \mbox{ if } \ i+r=2l,\\
\varphi(P_{i,j},P_{r,t})&\in \langle C\rangle, && \mbox{ if } \ i+r=2l+1.
\end{align*}
We now consider the invariance with respect to the elements $M_{m,n}\in\so(d,\C).$ Let $i+r=2l-1.$ Then by $M_{m,n}.\varphi=0,$ we have
\begin{align*}
(M_{m,n}.\varphi)(P_{i,j},P_{r,t})&=[M_{m,n},\varphi(P_{i,j},P_{r,t})]-\varphi([M_{m,n},P_{i,j}],P_{r,t})-\varphi(P_{i,j},[M_{m,n},P_{r,t}])\\
        &=\delta_{m,j}\varphi(P_{i,n},P_{r,t})-\delta_{n,j}\varphi(P_{i,m},P_{r,t})+\delta_{m,t}\varphi(P_{i,j},P_{r,n})-\delta_{n,t}\varphi(P_{i,j},P_{r,m})=0.
\end{align*}
If $j\neq t$ and $n\neq j,t,$ then we obtain that $\varphi(P_{i,n},P_{r,t})=0.$ Letting the index $n$ run through $\{1,2,\dots,d\},$ we get that $\varphi(P_{i,j},P_{r,t})=0$ for $j\neq t.$ If $m=j, \ n=t,$ then $\varphi(P_{i,j},P_{r,j})=\varphi(P_{i,t},P_{r,t}).$ We now apply the 2-cochain condition for the elements $P_{i,j},P_{r,j},P_{r,t}\in V.$ Thus, we have
\begin{align*}
d^2\varphi(P_{i,j},P_{r,j},P_{r,t})&=[P_{i,j},\varphi(P_{r,j},P_{r,t})]+[P_{r,j},\varphi(P_{r,t},P_{i,j})]+[P_{r,t},\varphi(P_{i,j},P_{r,j})]\\
&+\varphi(P_{i,j},[P_{r,j},P_{r,t}])+\varphi(P_{r,j},[P_{r,t},P_{i,j}])+\varphi(P_{r,t},[P_{i,j},P_{r,j}])\\
&=[P_{r,t},\varphi(P_{i,j},P_{r,j})]=0.
\end{align*}
Note that $[P_{r,t},H]$ does not vanish, and thus, we obtain that $\varphi(P_{i,j},P_{r,j})=0.$ Hence,
$$\varphi(P_{i,j},P_{r,t})=0, \quad i+r=2l-1,  \quad j,t\in\{1,2,\dots,d\}.$$
Applying similar arguments, we can get that $\varphi(P_{i,j},P_{r,t})=0,$ $i+r=2l+1.$ Now, let $i+r=2l.$ Then by the invariance with respect to $C\in\Sl(2,\C),$ we have
\begin{align*}
(C.\varphi)(P_{i,j},P_{r,t})&=[C,\varphi(P_{i,j},P_{r,t})]-\varphi([C,P_{i,j}],P_{r,t})-\varphi(P_{i,j},[C,P_{r,t}])\\
        &=[C,\varphi(P_{i,j},P_{r,t})]-(2l-i)\varphi(P_{i+1,j},P_{r,t})-(2l-r)\varphi(P_{i,j},P_{r+1,t})=0.
\end{align*}
Summarizing all the relations obtained above and taking $\varphi(P_{i+1,j},P_{r,t})=0,$ $\varphi(P_{i,j},P_{r+1,t})=0$ and $[C,D]=2C$ into account, we conclude that $\varphi\in Z^2(V,\so(d,\C))^{\ms}.$ Then by Lemma \ref{mainlem2}, we get that $ Z^2(V,\ms)^{\ms}$ is either zero or one-dimensional depending on whether $l\in\frac{1}{2}+\N_0$ or $l\in\N_0.$
\end{proof}

\begin{lem}{\label{mainlem4}}
$Z^2(V,V)^{\ms}=0.$
\end{lem}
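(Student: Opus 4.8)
Let me think about this carefully. The setup: $V = V_{2\ell\omega, \omega_1}$ is the abelian ideal, $\ms = \Sl_2(\C) \oplus \so_d(\C)$, and we want $\ms$-invariant 2-cocycles $\varphi: \wedge^2 V \to V$ (where $V$ is the trivial... no wait, $V$ is the adjoint module restricted, but $V$ is abelian so as a $V$-module it's trivial, but as an $\ms$-module it's the irreducible rep).

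Actually, a 2-cochain $\varphi \in C^2(V, V) = \text{Hom}(\wedge^2 V, V)$. Since $V$ is abelian, the cocycle condition $d^2\varphi = 0$: for abelian $V$, the differential of a 2-cochain with values in trivial module... Let me recall. $V$ acts on $V$ — but $V$ acting on the module $V$. Here the module is $\mg$ restricted, and the action of $V \subseteq \mg$ on $V \subseteq \mg$ is via bracket, which is zero since $V$ abelian. So $V$ is a trivial $V$-module. So $d^2\varphi(a,b,c) = -\varphi([a,b],c) + \ldots = 0$ automatically since all brackets vanish. So every 2-cochain is a cocycle: $Z^2(V,V) = C^2(V,V) = \text{Hom}(\wedge^2 V, V)$.

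So I need: $\text{Hom}(\wedge^2 V, V)^{\ms} = 0$. This is a pure representation theory computation.

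**Approach.** The key is grading by $\ms$-weights, specifically by the $\Sl_2$-eigenvalue of $\mathrm{ad}_D$, exactly as in the previous lemmas.

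The plan:

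First I would observe that since $V$ is abelian, the bracket on $V$ restricted to $V$ is trivial, so the cocycle condition $d^2\varphi = 0$ is automatically satisfied by every element of $C^2(V,V)$. Hence $Z^2(V,V)^{\ms} = C^2(V,V)^{\ms} = \mathrm{Hom}(\wedge^2 V, V)^{\ms}$, and the problem reduces to showing this Hom-space of $\ms$-invariants vanishes.

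Second, I would apply the generalized Schur's lemma (Lemma 2.3). Since $\ms$ is reductive and $V$ is finite-dimensional, both $\wedge^2 V$ and $V$ are completely reducible $\ms$-modules. Therefore $\mathrm{Hom}_{\ms}(\wedge^2 V, V)$ is nonzero precisely when $V$ appears as an irreducible constituent of $\wedge^2 V$, and its dimension equals that multiplicity. So the whole lemma reduces to showing that the irreducible module $V = V_{2\ell\omega,\omega_1}$ does \emph{not} occur in the decomposition of $\wedge^2 V$.

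Third, to settle the multiplicity I would use the $\mathrm{ad}_D$ grading to cut down the work, mirroring \eqref{Dphi} and \eqref{Pphi}. An $\ms$-invariant $\varphi$ must map each weight space of $\wedge^2 V$ into the matching weight space of $V$; the basis vectors $P_{i,j}$ span the $\mathrm{ad}_D$-eigenvalue-$2(\ell-i)$ part of $V$, and $P_{i,j}\wedge P_{r,t}$ carries eigenvalue $2(2\ell - i - r)$. Invariance under $D$ forces $\varphi(P_{i,j}\wedge P_{r,t})$ to land in the $V$-weight space of eigenvalue $2(2\ell-i-r)$, i.e. in $\langle P_{s,\cdot}\rangle$ with $s = i+r-\ell$. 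I would then impose invariance under the raising/lowering operators $H,C \in \Sl_2$ together with the full $\so_d$-action, and track the highest-weight vector of a hypothetical copy of $V$ inside $\wedge^2 V$. Concretely, $\wedge^2 V \cong (\wedge^2 V_{2\ell\omega}) \otimes (\wedge^2 \C^d) \oplus (S^2 V_{2\ell\omega})\otimes \ldots$ factorizes along the tensor-product structure $V = V_{2\ell} \boxtimes \C^d$, and I would analyze the $\Sl_2$-factor and the $\so_d$-factor separately: the $\Sl_2$-irreducible $V_{2\ell}$ (dimension $2\ell+1$) occurs in $\wedge^2 V_{2\ell}$ or $S^2 V_{2\ell}$ with controllable multiplicity, and the vector rep $\C^d$ of $\so_d$ must be matched against $\wedge^2\C^d$ and $S^2\C^d$.

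**The main obstacle.** The hard part is the $\Sl_2 \oplus \so_d$ parity bookkeeping. Because $V = V_{2\ell}\boxtimes\C^d$, the exchange of two factors in $\wedge^2 V$ correlates the symmetry type in the $\Sl_2$-slot with that in the $\so_d$-slot: $\wedge^2 V = (\wedge^2 V_{2\ell}\otimes S^2\C^d)\oplus(S^2 V_{2\ell}\otimes \wedge^2\C^d)$. To get a copy of $V = V_{2\ell}\boxtimes\C^d$ one needs $V_{2\ell}$ to appear in $\wedge^2 V_{2\ell}$ paired with $\C^d$ in $S^2\C^d$, \emph{or} $V_{2\ell}$ in $S^2 V_{2\ell}$ paired with $\C^d$ in $\wedge^2\C^d$. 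The adjoint (vector) rep $\C^d$ sits inside $\wedge^2\C^d \cong \so_d$ only in the degenerate low-dimensional cases, and for $d\geq 3$ the vector representation does not occur in $\wedge^2\C^d$; meanwhile $V_{2\ell}$ occurs in $\wedge^2 V_{2\ell}$ with the wrong $\Sl_2$-parity to be matched. I expect the cleanest path is to check both pairings fail for $d \geq 3$ and all $\ell \in \tfrac12\N_0$ via this parity argument, which is why the lemma needs no case split on $\ell$, unlike the previous two. I would most likely execute this directly on highest-weight vectors using the explicit brackets $[H,P_{n,i}] = -nP_{n-1,i}$ and $[C,P_{n,i}] = (2\ell-n)P_{n+1,i}$ to rule out an invariant $\varphi$, rather than quoting Clebsch–Gordan formulas, to stay self-contained.
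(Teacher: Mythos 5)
Your reduction is correct and is genuinely different from (and cleaner than) the paper's argument: since $V$ is abelian, every $2$-cochain is a cocycle and $B^2(V,V)=0$, so the lemma is equivalent to $\mathrm{Hom}_{\ms}(\wedge^2V,V)=0$, and the decomposition $\wedge^2(V_{2\ell}\boxtimes\C^d)=(\wedge^2V_{2\ell}\otimes S^2\C^d)\oplus(S^2V_{2\ell}\otimes\wedge^2\C^d)$ is exactly the right tool. For $\ell\in\frac12+\N_0$ your plan closes immediately, because $V_{2\ell}$ does not occur in $V_{2\ell}\otimes V_{2\ell}$ at all (the constituents are $V_{4\ell-2j}$, and $4\ell-2j=2\ell$ forces $j=\ell\notin\Z$); this matches the paper's one-line $\mathrm{ad}_D$-weight argument. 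Likewise your first pairing always fails for $d\ge3$, since $S^2\C^d$ decomposes as trivial plus traceless symmetric and contains no copy of the vector representation.

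The gap is in the second pairing: the claim that the vector representation does not occur in $\wedge^2\C^d$ for $d\ge3$ is false at $d=3$, where $\wedge^2\C^3\cong\C^3$ as $\so(3,\C)$-modules (Hodge star / cross product) — exactly the ``degenerate low-dimensional case'' you meant to exclude. By the Clebsch--Gordan parity, $V_{2\ell}\subset S^2V_{2\ell}$ precisely when $\ell$ is an even integer, so for $d=3$ and $\ell\in2\N_0$ the summand $S^2V_{2\ell}\otimes\wedge^2\C^3$ does contain a copy of $V$ and your argument cannot be completed. This is not a bookkeeping slip you can patch: for $d=3$, $\ell=0$ the alternating map $\varphi(P_{0,i},P_{0,j})=\sum_k\varepsilon_{ijk}P_{0,k}$ (Levi--Civita symbol) is a nonzero element of $Z^2(V,V)^{\ms}$, since $\Sl(2,\C)$ acts trivially on $V_0$ and the cocycle identity is vacuous on an abelian ideal; so your method in fact exposes that the statement requires $\ell\in\frac12+\N_0$, or $d\ge4$, or $\ell$ odd. (For what it is worth, the paper's own treatment of the integer-$\ell$ case rests on choosing $M_{m,n}$ with $m,n\notin\{j,t\}$, which is impossible when $d=3$, so it is silent on the same configurations; in the half-integer case actually used in Theorem \ref{mainthm} both routes are sound.)
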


\begin{proof}
At first, note that by \eqref{Pphi}, $\varphi(P_{i,j},P_{r,t})$ is an eigenvector of ${\rm ad}_{D}$ with eigenvalue $2(2l-i-r).$ So, $\varphi(P_{i,j},P_{r,t})$ may not vanish, if it belongs to the following subspace:
\begin{align*}
\varphi(P_{i,j},P_{r,t})&\in \langle P_{i+r-l,k} \ | \ k=1,2,\dots,d \rangle.
\end{align*}
Here, we can immediately obtain that, if $l\in\frac{1}{2}+\N_0,$ then $\varphi(P_{i,j},P_{r,t})=0$ for all $P_{i,j},P_{r,t}\in V,$ that is, $ Z^2(V,V)^{\ms}=0.$

Now, let $l\in\N_0.$ Then we consider the invariance of $\varphi$ with respect to $M_{m,n}\in\so(d,\C), \ m\neq j,t, \ n\neq j,t.$ Hence, we have
\begin{align*}
(M_{m,n}.\varphi)(P_{i,j},P_{r,t})&=[M_{m,n},\varphi(P_{i,j},P_{r,t})]-\varphi([M_{m,n},P_{i,j}],P_{r,t})-\varphi(P_{i,j},[M_{m,n},P_{r,t}])\\
&=[M_{m,n},\varphi(P_{i,j},P_{r,t})]=0.
\end{align*}
Thus, we get $\varphi(P_{i,j},P_{r,t})\in\langle P_{i+r-l,j},P_{i+r-l,t}\rangle.$ Moreover, using the invariance relation $(M_{m,j}.\varphi)(P_{i,j},P_{r,t})=0, \ m\neq t,$ we have
\begin{align*}
[M_{m,j},\varphi(P_{i,j},P_{r,t})]&-\varphi([M_{m,j},P_{i,j}],P_{r,t})-\varphi(P_{i,j},[M_{m,j},P_{r,t}])\\
&=[M_{m,j},\varphi(P_{i,j},P_{r,t})]-\varphi(P_{i,m},P_{r,t})=0.
\end{align*}
This follows that $\varphi(P_{i,m},P_{r,t})\in\langle P_{i+r-l,m}\rangle.$ However, the relation $(M_{t,k}.\varphi)(P_{i,m},P_{r,k})=0$ implies that $\varphi(P_{i,m},P_{r,t})\in\langle P_{i+r-l,t}\rangle.$ Hence,
\begin{align}\label{PhiVV}
\varphi(P_{i,j},P_{r,t})=0, \quad j\neq t.
\end{align}
Let $\varphi(P_{i,j},P_{r,j})=\alpha P_{i+r-l,j}.$ Then the invariance relation $(M_{n,j}.\varphi)(P_{i,j},P_{r,j})=0$ leads to
\begin{align*}
&[M_{n,j},\varphi(P_{i,j},P_{r,j})]-\varphi([M_{n,j},P_{i,j}],P_{r,j})-\varphi(P_{i,j},[M_{n,j},P_{r,j}])\\
&=[M_{n,j},\varphi(P_{i,j},P_{r,j})]-\varphi(P_{i,n},P_{r,j})-\varphi(P_{i,j},P_{r,n})=[M_{n,j},\varphi(P_{i,j},P_{r,j})]=0.
\end{align*}
Taking $\varphi(P_{i,j},P_{r,j})\in\langle P_{i+r-l,j}\rangle$ and $[M_{n,j},P_{i+r-l,j}]\neq0$ into account, we obtain that $\varphi(P_{i,j},P_{r,j})=0.$ Hence, $ Z^2(V,V)^{\ms}=0.$
\end{proof}

Here, we note that $H^3(V,V)^\ms\cong {\rm Hom}_\ms(\Lambda^3(V),V)$ (see \cite{BU75}). In our case, $V=V_{2l\omega,\omega_1}$ and $\Lambda^3(V)$ is $C_{n}^3$-dimensional $\ms$-module, where $n=d(2l+1).$ Then, we claim that $\Lambda^3(V)$ does not contain a submodule isomorphic to $V.$ The reason for this is that the linearly independent elements in $V$ with eigenvalue $2l$ with respect to ${\rm ad}_D$ are $P_{0,1},P_{0,2},\dots,P_{0,d}.$ Since $D$ belongs to the Cartan subalgebra of $\ms,$ the maximal vector must belong to the linear span of the above mentioned vectors. However, the number of linearly independent vectors in $\Lambda^3(V)$ with eigenvalue $2l$ is more than $d.$ Indeed, the following elements are among them:
\begin{align*}
P_{2l,1}&\wedge P_{0,2}\wedge P_{0,1},\quad P_{2l,1}\wedge P_{0,2}\wedge P_{0,3}, \dots,P_{2l,1}\wedge P_{0,2}\wedge P_{0,d},\\
P_{2l,1}&\wedge P_{0,3}\wedge P_{0,1},\quad P_{2l,1}\wedge P_{0,3}\wedge P_{0,2}, \dots,P_{2l,1}\wedge P_{0,3}\wedge P_{0,d},\dots
\end{align*}
Moreover, the vector subspace $W$ of $\Lambda^3(V),$ which is the linear span of these vectors, is stable under the action of the subalgebra $\so(d,\C)$ of $\ms.$ This means that $W$ is a subspace of some irreducible submodule of $\Lambda^3(V)$ which is bigger than $V.$ If we apply generalized Schur's lemma, then we obtain that $H^3(V,V)^\ms\cong {\rm Hom}_\ms(\Lambda^3(V),V)=0.$
Now, we are ready to present the proof of the main result.
\begin{proof}[{\bf Proof of Theorem \ref{mainthm}}]
By Hochschild-Serre factorization theorem, we have $H^2(\mg,\mg)\cong H^2(V,\mg)^{\ms}.$ Moreover, by \eqref{iso23}, we have $H^2(V,V)^{\ms}=0$ and $H^2(V,\mg/V)\cong H^2(V,\ms)$ is zero or $1$-dimensional depending on whether $l\in\frac{1}{2}+\N_0$ or $l\in\N_0.$ Hence, if $l\in\frac{1}{2}+\N_0,$ then by Lemma \ref{2.1}, we get $H^2(V,\mg)^\ms=0.$ And if $l\in\N_0,$ then Lemma \ref{2.1} yields to the following exact sequence:
$$0\to H^2(V,\mg)^\ms\to \C\to 0,$$
which implies that $H^2(V,\mg)^\ms\cong\C.$ Thus, $H^2(\mg,\mg)\cong\C.$
\end{proof}

\subsection{The case $d=1$ and $l\in\N_0$}
In this case, the algebra $\mathfrak{cga}_\ell(1,\C)$ is nothing, but, semidirect product of the special linear Lie algebra $\Sl(2,\C)$ and its $(2l+1)$-dimensional irreducible module $V_{2l}.$ In \cite{Rauch}, the following result have been proved:
\begin{thm}{\label{Rauch}} Let $\mg=\Sl(2,\C)\ltimes V_{2l}.$ Then
\begin{align*}
H^2(\mg,\mg)&\cong \C, \ \mbox{ if } \ l\equiv1\mod4 \mbox{ or } \ l=2,\\
H^2(\mg,\mg)&=0, \ \mbox{ otherwise}.
\end{align*}
\end{thm}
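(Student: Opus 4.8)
The plan is to reuse the machinery developed for $d\ge 3$, now in the degenerate situation $d=1$ where $\so_d$ disappears and the whole computation is governed by $\ms=\Sl(2,\C)$ alone. Writing $\mg=\ms\ltimes V$ with $V=V_{2l}$ the $(2l+1)$-dimensional irreducible module, Whitehead's lemmas give $H^1(\ms,\C)=H^2(\ms,\C)=0$, so the Hochschild--Serre factorization theorem collapses to $H^2(\mg,\mg)\cong H^2(V,\mg)^{\ms}$. I would then analyze $H^2(V,\mg)^{\ms}$ through the long exact sequence of Lemma~\ref{2.1}, using that $\mg/V\cong\ms$ with its adjoint action, i.e.\ $\mg/V\cong V_2$ as an $\ms$-module.

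Since $V$ is abelian and acts trivially on both $V$ and $\mg/V$, the Chevalley--Eilenberg differentials on $C^\bullet(V,V)$ and $C^\bullet(V,\mg/V)$ vanish, so every term of the sequence reduces to a space of equivariant maps: $H^k(V,V)^{\ms}\cong\mathrm{Hom}_{\ms}(\Lambda^k V_{2l},V_{2l})$ and $H^k(V,\mg/V)^{\ms}\cong\mathrm{Hom}_{\ms}(\Lambda^k V_{2l},V_2)$. I would compute these for $k=1,2,3$ from the classical $\Sl(2,\C)$ plethysm formulas, such as $\Lambda^2 V_n=\bigoplus_{j\ge0}V_{2n-2-4j}$ together with the analogous (longer) expansion of $\Lambda^3 V_n$, reading off the multiplicities of $V_{2l}$ and of $V_2$. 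Congruences modulo $4$ already surface here, since, for example, $V_{2l}$ occurs in $\Lambda^2 V_{2l}$ exactly when $2l\equiv 2\pmod 4$, while $V_2$ occurs in $\Lambda^2 V_{2l}$ exactly when $2l\ge 2$.

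The decisive ingredients are the two connecting homomorphisms $\delta_1\colon H^1(V,\mg/V)^{\ms}\to H^2(V,V)^{\ms}$ and $\delta_2\colon H^2(V,\mg/V)^{\ms}\to H^3(V,V)^{\ms}$. Each is obtained by lifting an $\ms$-valued cochain $\varphi$ to $\mg$ and applying the differential, yielding $(\delta\varphi)(v_0,\dots,v_k)=\sum_i(-1)^i[v_i,\varphi(v_0,\dots,\widehat{v_i},\dots,v_k)]$ with the bracket $V\otimes\ms\to V$ being the module action. Because the relevant source and target spaces are at most one-dimensional, the generalized Schur lemma forces each $\delta_i$ to be multiplication by a scalar, and I would determine that scalar by evaluating $\delta_i$ on an explicit highest-weight generator of the copy of $V_{2l}$ or $V_2$ inside the appropriate exterior power. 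The answer is then assembled as $\dim H^2(\mg,\mg)=\dim\operatorname{coker}(\delta_1)+\dim\ker(\delta_2)$.

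The hard part is controlling the two Clebsch--Gordan scalars $\delta_1,\delta_2$ uniformly in $l$, together with the full decomposition of $\Lambda^3 V_{2l}$ and the multiplicity of $V_{2l}$ in it. It is precisely the interplay between the parity of these multiplicities and the vanishing or non-vanishing of the connecting scalars that must produce the stated arithmetic --- the congruence $l\equiv 1\pmod 4$ and the sporadic value $l=2$ --- so the delicate point is not the multiplicity bookkeeping but the exact evaluation of $\delta_1$ and $\delta_2$ on highest-weight vectors.
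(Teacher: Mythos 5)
First, a point of comparison: the paper does not prove this theorem at all --- it is quoted from Rauch's 1972 paper \cite{Rauch} (merely rephrased in terms of $l$ rather than $\dim\mg$), so there is no in-paper proof to measure your attempt against. Judged on its own, your reduction is sound and is exactly the Burde--Wagemann machinery the paper uses for $d\ge 3$: Hochschild--Serre collapses $H^2(\mg,\mg)$ to $H^2(V,\mg)^{\ms}$, the abelian ideal with trivial coefficients gives $H^k(V,V)^{\ms}\cong\mathrm{Hom}_{\ms}(\Lambda^kV_{2l},V_{2l})$ and $H^k(V,\mg/V)^{\ms}\cong\mathrm{Hom}_{\ms}(\Lambda^kV_{2l},V_2)$, and Lemma~\ref{2.1} reduces everything to the two connecting maps $\delta_1,\delta_2$. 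Your plethysm facts for $\Lambda^2V_{2l}$ are correct.

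The genuine gap is that the proposal ends exactly where the theorem begins. The entire arithmetic content --- the congruence $l\equiv 1\pmod 4$ and the sporadic case $l=2$ --- must come out of (i) the multiplicity of $V_{2l}$ in $\Lambda^3V_{2l}$ and (ii) the vanishing or non-vanishing of $\delta_1$ and $\delta_2$, and none of this is computed; you explicitly defer it. Note also that your appeal to Schur's lemma to make each $\delta_i$ ``multiplication by a scalar'' is imprecise for $\delta_2$: the target $\mathrm{Hom}_{\ms}(\Lambda^3V_{2l},V_{2l})$ is \emph{not} at most one-dimensional for large $l$ (the multiplicity of $V_{2l}$ in $\Lambda^3V_{2l}$ grows), so while $\ker\delta_2$ is still controlled by whether $\delta_2$ vanishes on the one-dimensional source, the claimed structural simplification is not available, and the explicit highest-weight evaluation becomes unavoidable. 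Without carrying out that Clebsch--Gordan computation uniformly in $l$ (or at least exhibiting the mechanism that singles out $l\equiv1\pmod4$ and $l=2$), the statement is not proved; what you have is a correct reduction to a computation, not the computation itself.
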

Note that in \cite{Rauch}, the result is originally formulated in terms of the dimension of $\mg$. However, in the above theorem, we have rephrased their theorem using the parameter $l$, in order to emphasize its relevance to the conformal Galilei algebra $\mathfrak{cga}_\ell(d,\C)$ and to highlight how the result depends on this parameter.

\subsection{The case $d=2$ and $l\in\N_0$}

Let $l\in\frac{1}{2}\N, \ l\geq 1.$ For $i=0,1,\dots,2l,$ denote $P_{i,1},P_{i,2}$ by $P_i$ and $Q_i,$ respectively. Then $\mathfrak{cga}_\ell(2,\C)$ is isomorphic to the following non-trivial semidirect product:
$$\ms\ltimes \mr,$$
where $\ms=\Sl(2,\C),$ $\mr$ is $(4l+3)$-dimensional solvable Lie algebra with the basis $\{M_{1,2},P_{0},Q_{0},\dots,P_{2l},Q_{2l}\}$ and the following non-trivial commutation relations:
\begin{align*}
[M_{1,2},P_{i}]=-Q_{i}, \quad [M_{1,2},Q_{i}]=P_{i}, \quad i=0,1,\dots,2l.
\end{align*}
It should be noted that $\mr$ is decomposed as a direct sum of the submodules as follows:
\begin{align*}
V=V_1\oplus V_2\oplus \langle M_{1,2}\rangle,
\end{align*}
where $V_1=\langle P_{0},P_1,\dots,P_{2l} \rangle,$ $V_2=\langle Q_{0},Q_1,\dots,Q_{2l} \rangle.$ Then we can state the following theorem:

\begin{thm}{\label{mainthm2}} Let $\mg=\mathfrak{cga}_\ell(2,\C)$ and $l\in\frac{1}{2}\N_0.$ Then
\begin{align*}
\dim H^2(\mg,\mg)&=2, \quad \mbox{ if } \ l=0,\frac{1}{2}, 1,\\
\dim H^2(\mg,\mg)&=1,  \qquad \mbox{ otherwise. }
\end{align*}
\end{thm}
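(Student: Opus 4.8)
The plan is to apply the Hochschild--Serre factorization theorem exactly as in the case $d\geq 3$, reducing the computation to $H^2(\mathfrak{r},\mg)^{\ms}\cong H^2(\mathfrak{r},\mg)^{\Sl(2,\C)}$, and then to exploit the short exact sequence of Lemma~\ref{2.1}. The crucial structural difference from the higher-dimensional case is that the radical $\mr$ is no longer abelian: it contains the distinguished element $M_{1,2}$, which acts on the $P_i,Q_i$ by rotating the two spatial directions. This means we cannot directly invoke Lemma~\ref{mainlem4} and its companions; instead I would first decompose $\mr$ as the $\ms$-module $V_1\oplus V_2\oplus\langle M_{1,2}\rangle$ and analyze how the nonabelian bracket $[M_{1,2},P_i]=-Q_i$, $[M_{1,2},Q_i]=P_i$ interacts with the cocycle condition. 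Passing to the complexified eigenbasis $P_i^{\pm}=P_i\mp\sqrt{-1}\,Q_i$ diagonalizes $\operatorname{ad}_{M_{1,2}}$, so that $M_{1,2}$ behaves like a second Cartan-type grading operator alongside $D$; the two gradings together sharply constrain which $\varphi(x,y)$ can be nonzero.

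The main computational step is to determine $Z^2(\mr,\mr)^{\ms}$ and the relevant coboundary space, organized by the weight analysis with respect to both $\operatorname{ad}_D$ (eigenvalue $2(2l-i-r)$ on the $P$-part, as in \eqref{Dphi}) and $\operatorname{ad}_{M_{1,2}}$. I would first handle the $\Sl(2,\C)$-invariance with respect to $D,H,C$ to force $i+r=2l$ on the surviving components and to collapse the target into the appropriate $\operatorname{ad}_D$-eigenspace, mirroring Lemmas~\ref{mainlem2}--\ref{mainlem4}. The essential new ingredient is the $M_{1,2}$-invariance: because $d=2$, the rotation subalgebra $\so(2,\C)$ is one-dimensional and abelian rather than simple, so the simplicity arguments of Lemma~\ref{mainlem2} (which repeatedly used that $[M_{t,k},M_{m,n}]\neq 0$ for a suitable pair) are unavailable. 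Consequently the components $\varphi(P_i,P_r)$, $\varphi(P_i,Q_r)$, $\varphi(Q_i,Q_r)$ valued in $\langle M_{1,2}\rangle$ need no longer vanish, and the exotic central extension relation signals exactly where extra invariant cocycles appear. I expect this to produce a one-parameter family generically, together with additional invariant classes for the small values $l=0,\tfrac12,1$, accounting for the jump to dimension $2$.

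The hard part will be the bookkeeping for the small spins $l=0,\tfrac12,1$, where the $\ms$-module $V_{2l}$ is of low dimension and the generic vanishing arguments degenerate: the $\Lambda^2$ and $\Lambda^3$ decompositions contain accidental copies of $V$ that are absent for larger $l$, just as the special cases $d=3,4$ required separate treatment in Lemma~\ref{mainlem2}. For these values I would compute $\operatorname{Hom}_{\ms}(\Lambda^2(V_{2l}),\mr)$ and $\operatorname{Hom}_{\ms}(\Lambda^3(V_{2l}),\mr)$ directly via Clebsch--Gordan, identify the extra invariant cocycle responsible for the second cohomology class, and verify by the connecting-map analysis in Lemma~\ref{2.1} that it survives to $H^2(\mr,\mg)^{\ms}$ rather than being killed by a coboundary. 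For generic $l\geq\tfrac{3}{2}$ the remaining obstacle is to confirm that the long exact sequence forces $\dim H^2(\mr,\mg)^{\ms}=1$, which amounts to showing $H^2(\mr,\mr)^{\ms}$ contributes exactly one independent class and that the map into $H^2(\mr,\mr/V_1\oplus V_2)^{\ms}$ neither adds nor removes dimensions; pinning down this single surviving class, tied to the exotic extension cocycle $q_m$, is where the real work lies.
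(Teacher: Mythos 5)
Your overall architecture (Hochschild--Serre reduction to $H^2(\mr,\mg)^{\Sl(2,\C)}$, the decomposition $\mr=V_1\oplus V_2\oplus\langle M_{1,2}\rangle$, and the double weight analysis with respect to ${\rm ad}_D$ and ${\rm ad}_{M_{1,2}}$) matches the paper's, but you have misidentified the source of the surviving cohomology class. You tie the single generic class to the exotic central extension cocycle $q_m$; this cannot be right, because the exotic extension exists only for integer $\ell$, whereas the theorem asserts $\dim H^2(\mg,\mg)=1$ for all $\ell\geq\frac{3}{2}$, half-integers included. In the paper's computation all components $\varphi(P_i,P_j)$, $\varphi(P_i,Q_j)$, $\varphi(Q_i,Q_j)$ are forced to vanish; the four-dimensional space of invariant cocycles that survives is carried entirely by $\varphi(P_i,M_{1,2})=\alpha P_i+\beta Q_i$ and $\varphi(Q_i,M_{1,2})=\gamma P_i+\mu Q_i$, and the one class left after quotienting by the three-dimensional coboundary space reflects precisely that the rotation generator $M_{1,2}$ sits in the radical rather than in the Levi subalgebra (this is the point of the paper's closing remark for Section 4). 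Hunting for the class among the components valued in $\langle M_{1,2}\rangle$ that mimic the exotic bracket would send you down a dead end.

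Two further gaps. First, Lemma \ref{2.1} is stated for an abelian $V$, and for $d=2$ the radical $\mr$ is not abelian, so you cannot invoke that exact sequence verbatim; the paper avoids it altogether and simply counts $\dim Z^2(\mr,\mg)^{\ms}=4$ against $\dim B^2(\mr,\mg)^{\ms}=3$. Second, your split into ``generic $\ell\geq\frac32$'' versus ``small $\ell=0,\frac12,1$'' is too optimistic: the index-separation condition \eqref{idx} that kills the unwanted cocycle components requires enough values in $\{0,\dots,2\ell\}$ and fails for $\ell=\frac32,2,\frac52$ (for $\ell=\frac32$ no admissible triple $(i,j,k)$ exists), which is why the paper verifies these intermediate spins by direct computer computation and only runs the uniform argument for $\ell\geq3$. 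You would need either a separate treatment of $\ell=\frac32,2,\frac52$ or a sharper argument that does not rely on \eqref{idx}.
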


\begin{lem}\label{d2B2}
$\dim(B^2(V,\mathfrak{cga}_\ell(2,\C))^{\ms})=3.$
\end{lem}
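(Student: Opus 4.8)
The plan is to compute $B^2(V,\mg)^{\ms}$ directly, in the spirit of Lemma \ref{mainlem1}, by reducing to $\ms$-invariant $1$-cochains and evaluating their coboundaries. Since $d^1\colon C^1(V,\mg)\to C^2(V,\mg)$ is $\ms$-equivariant and the functor of $\ms$-invariants is exact on finite-dimensional modules, every $\ms$-invariant coboundary is the coboundary of an $\ms$-invariant $1$-cochain, so
\[
B^2(V,\mg)^{\ms}=d^1\big(C^1(V,\mg)^{\ms}\big),
\]
and hence $\dim B^2(V,\mg)^{\ms}=\dim C^1(V,\mg)^{\ms}-\dim Z^1(V,\mg)^{\ms}$. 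A $1$-cochain is $\ms$-invariant exactly when it is an $\ms$-module homomorphism $\omega\colon V\to\mg$, so the first task is to describe ${\rm Hom}_{\ms}(V,\mg)$ via the generalized Schur lemma. As $\ms=\Sl(2,\C)$-modules one has $V=V_1\oplus V_2\oplus\langle M_{1,2}\rangle$ with $V_1=\langle P_0,\dots,P_{2l}\rangle$ and $V_2=\langle Q_0,\dots,Q_{2l}\rangle$ both isomorphic to $V_{2l}$ and $\langle M_{1,2}\rangle$ trivial, while $\mg=\ms\oplus V$ adds one copy of the three-dimensional adjoint module $\ms$.

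I would treat the generic spins first, meaning those for which $V_{2l}$ is isomorphic neither to the trivial module nor to the adjoint module. There the matching of irreducible constituents forces
\[
\omega(M_{1,2})=cM_{1,2},\qquad \omega(P_i)=aP_i+bQ_i,\qquad \omega(Q_i)=a'P_i+b'Q_i,
\]
a five-parameter family, so $\dim C^1(V,\mg)^{\ms}=5$.

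Next I would evaluate $d^1\omega$. Because the only nonzero brackets inside $V$ are $[M_{1,2},P_i]=-Q_i$ and $[M_{1,2},Q_i]=P_i$, and the images $\omega(P_i),\omega(Q_i)$ lie in the abelian subspace $V_1\oplus V_2$ which commutes with every $P_j,Q_j$, the coboundary vanishes on all pairs not involving $M_{1,2}$, and a short computation gives
\begin{align*}
(d^1\omega)(M_{1,2},P_i)&=(b+a')P_i+(b'-a-c)Q_i,\\
(d^1\omega)(M_{1,2},Q_i)&=(b'+c-a)P_i-(a'+b)Q_i.
\end{align*}
As $(a,b,a',b',c)$ range over $\C^5$ the three quantities $b+a'$, $b'-a$ and $c$ vary independently, so $Z^1(V,\mg)^{\ms}$ is cut out by $b+a'=0$, $b'=a$, $c=0$ and is two-dimensional, whence $B^2(V,\mg)^{\ms}$ is three-dimensional. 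Explicitly it is spanned by the coboundaries determined by $(M_{1,2},P_i)\mapsto P_i$, $(M_{1,2},Q_i)\mapsto -Q_i$; by $(M_{1,2},P_i)\mapsto Q_i$; and by $(M_{1,2},Q_i)\mapsto P_i$.

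The hard part will be the low-spin degeneracies, namely $l=0$ (where $V_{2l}$ is the trivial module) and $l=1$ (where $V_{2l}$ is the adjoint module $\ms$). In these cases ${\rm Hom}_{\ms}(V,\mg)$ is strictly larger, since $\omega(P_i)$ and $\omega(Q_i)$ may acquire a component along $M_{1,2}$ or along $\ms$; such components no longer commute with the $P_j,Q_j$, so $d^1\omega$ now takes nonzero values on the pairs $(P_i,P_j)$, $(P_i,Q_j)$, $(Q_i,Q_j)$ as well. Recomputing $Z^1(V,\mg)^{\ms}$ while tracking these extra cocycle conditions, and thereby determining $\dim B^2(V,\mg)^{\ms}$ at these exceptional spins, is the main technical obstacle; these are also among the spins singled out in Theorem \ref{mainthm2}.
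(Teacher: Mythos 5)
Your argument is essentially the paper's: both reduce $B^2(V,\mg)^{\ms}$ to the image of $d^1$ on the $\ms$-invariant $1$-cochains, identify those cochains as the five-parameter family $\omega(P_i)=aP_i+bQ_i$, $\omega(Q_i)=a'P_i+b'Q_i$, $\omega(M_{1,2})=cM_{1,2}$, and then cut out the two-dimensional space of cocycles (the paper phrases this as ``$\ms$-invariant derivations'' and gets the same relations $a'=-b$, $b'=a$, $c=0$), giving $5-2=3$. Your explicit formulas for $(d^1\omega)(M_{1,2},P_i)$ and $(d^1\omega)(M_{1,2},Q_i)$ check out. The one point worth settling is the ``hard part'' you defer. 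You are right that $l=0$ and $l=1$ are genuine degeneracies --- at $l=1$ the weight-matched embedding $P_0\mapsto 2H$, $P_1\mapsto D$, $P_2\mapsto C$ really is an $\ms$-invariant cochain $V_1\to\Sl(2,\C)$, it is not a cocycle, and its coboundary is supported on pairs $(P_i,P_j)$, so it is independent of the three coboundaries above and the stated count fails there. (The paper's own proof overlooks this by omitting $D$ from the possible weight-zero values of $\sigma(P_l)$.) But no further work is required of you: in the proof of Theorem \ref{mainthm2} this lemma is only invoked for $l\geq3$, the cases $l\leq\frac{5}{2}$ being settled by direct computation, and for every $l$ with $2l+1>3$ your generic Schur argument is complete. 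So read the lemma with the implicit restriction that $V_{2l}$ is neither trivial nor adjoint, state that restriction, and your proof is done.
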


\begin{proof}
Let $\sigma$ be an $\ms$-invariant 1-cochain. First, we show that $\sigma(P_{i}),\sigma(Q_{i})\in V, \ i\in\{0,1,\dots,2l\}, \ j=1,2.$ The invariance with respect to $D\in\ms$ leads to
\begin{align*}
(D.\sigma)(P_{i})=[D,\sigma(P_{i})]-\sigma([D,P_{i}])=[D,\sigma(P_{i})]-2(l-i)\sigma(P_{i})=0.
\end{align*}
So, $\sigma(P_{i})$ and $\sigma(Q_{i})$ are eigenvectors of ${\rm ad}_D$ with eigenvalue $2(l-i).$ Thus, we have the following relations:
\begin{align*}
\sigma(P_{l-1})&\in\langle H,P_{l-1},Q_{l-1} \rangle, &  \sigma(P_{l+1})&\in\langle C,P_{l+1},Q_{l+1}\rangle,\\
\sigma(P_{l})&\in\langle M_{1,2},P_{l},Q_{l}  \rangle, & \sigma(P_{i})&\in V, \quad i\neq l-1,l,l+1.
\end{align*}
Moreover, the invariance relations $H.\sigma=0$ and $C.\sigma=0$ imply that
\begin{align*}
(H.\sigma)(P_{l})&=[H,\sigma(P_{l})]-\sigma([H,P_{l}])=[H,\sigma(P_{l})]+l\sigma(P_{l-1})=0.
\end{align*}
Hence, $\sigma(P_{l-1})\in\langle P_{l-1}\rangle.$ Similarly, the invariance with respect to $C\in\ms$ follows that $\sigma(P_{l+1})\in\langle P_{l+1} \rangle$ and $\sigma(P_{l})\in\langle P_{l} \rangle.$ Similarly, we show that $\sigma(Q_{i})\in V.$ Thus, we have $\sigma:V\to V$ and $x.\sigma=0$ for $x\in\ms,$ that is,
$$(x.\sigma)(v)=[x,\sigma(v)]-\sigma([x,v])=0, \quad v\in V.$$
This means that $\sigma$ is an $\ms$-module homomorphism. Then by the generalized Schur's lemma we have
\begin{align}\label{InvB11}
\sigma(P_i)&=\lambda_{1,1}P_i+\lambda_{1,2}Q_i, & \sigma(Q_i)&=\lambda_{2,1}P_i+\lambda_{2,2}Q_i, &
\sigma(M_{1,2})&=\gamma M_{1,2},
\end{align}
where $\lambda_{ij},\gamma\in\C.$
Now, we define the dimension of the space of $\ms$-invariant derivations. Let $d$ be a derivation of the form \eqref{InvB11}, that is, $d(P_{i})=\alpha_{1,1}P_{i}+\alpha_{1,2}Q_{i},$ $d(Q_{i})=\alpha_{2,1}P_{i}+\alpha_{2,2}Q_{i},$ $d(M_{1,2})=\beta M_{1,2}$. Then, taking $[M_{1,2},P_{i}]=-Q_{i}$ into account, we have
\begin{align*}
-d(Q_{i})&=d([M_{1,2},P_{i}])=[d(M_{1,2}),P_{i}]+[M_{1,2},d(P_{i})]\\
&=\beta[M_{1,2},P_{i}]+\alpha_{1,1}[M_{1,2},P_{i}]+\alpha_{1,2}[M_{1,2},Q_{i}].
\end{align*}
This implies that $\beta+\alpha_{1,1}=\alpha_{2,2}, \ \alpha_{1,2}=-\alpha_{2,1}.$ Similarly, by $[M_{1,2},Q_{i}]=P_{i},$ we get that $\alpha_{11}=\alpha_{22}+\beta.$ Summarizing obtained relations, we get that $\alpha_{1,1}=\alpha_{2,2}, \ \beta=0.$
Thus, $\dim({\rm Der}(\mathfrak{cga}_\ell(2,\C))^{\ms})=2$ and hence, $\dim(B^2(V,\mathfrak{cga}_\ell(d,\C))^{\ms})=3.$
\end{proof}

\begin{lem}\label{deq2}
Let $l\geq3.$ Then $\dim(Z^2(V,\mathfrak{cga}_\ell(2,\C))^{\ms})=4.$
\end{lem}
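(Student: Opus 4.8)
The plan is to compute the space $Z^2(V,\mg)^{\ms}$ of $\ms$-invariant $2$-cocycles directly, in the spirit of the proofs of Lemmas~\ref{mainlem2}--\ref{mainlem4}, the essential new feature being that here $M_{1,2}$ lies in the radical $V=\mr$ rather than in the semisimple part, so that $V$ is non-abelian. Since $x.\varphi=0$ for all $x\in\ms$ says exactly that $\varphi$ is an $\Sl(2,\C)$-module homomorphism $\wedge^2 V\to\mg$, an invariant cocycle is a closed such homomorphism, and I would begin by grading by the ${\rm ad}_D$-weight: if $u,v$ are ${\rm ad}_D$-eigenvectors of weights $w_u,w_v$, then $D$-invariance forces $\varphi(u,v)$ to be an eigenvector of weight $w_u+w_v$. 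Recording the weights — $P_n,Q_n$ have weight $2(\ell-n)$, while $M_{1,2}$ and $D$ have weight $0$ and $H,C$ have weights $\pm2$ — confines every value to a short admissible list.

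I would then split the arguments into two families. For the pairs among the $P$'s and $Q$'s, $D$-invariance forces $\varphi(P_m,P_n),\varphi(P_m,Q_n),\varphi(Q_m,Q_n)$ to have weight $2(2\ell-m-n)$, so the image lies in $\langle P_{m+n-\ell},Q_{m+n-\ell}\rangle$ in general and can reach $H,D,C$ or $M_{1,2}$ only in the three narrow windows $m+n\in\{2\ell-1,2\ell,2\ell+1\}$; invariance under $H$ and $C$ then propagates each value along its weight string, and the cocycle condition $d^2\varphi=0$ applied to triples of $P$'s and $Q$'s (exactly the type of computation displayed in Lemmas~\ref{mainlem2} and \ref{mainlem3}) forces all but a few of them to vanish. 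For the pairs $(M_{1,2},P_n)$ and $(M_{1,2},Q_n)$, the $\ms$-fixedness of $M_{1,2}$ makes $\varphi(M_{1,2},-)$ an $\ms$-homomorphism $V_1\oplus V_2\to\mg$; since the $V_{2\ell}$-isotypic component of $\mg$ is precisely $V_1\oplus V_2$, the generalized Schur's lemma represents it by a $2\times2$ matrix of scalars. Because $V$ is non-abelian, the cocycle condition on the triples $(M_{1,2},P_m,Q_n)$, $(M_{1,2},P_m,P_n)$ and $(M_{1,2},Q_m,Q_n)$ couples these four scalars to the $(P,Q)$-data of the first family, so the two families must be solved simultaneously.

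Assembling the surviving parameters after all weight and closedness relations are imposed, I expect exactly four independent invariant cocycles to remain, giving $\dim Z^2(V,\mg)^{\ms}=4$; combined with $\dim B^2(V,\mg)^{\ms}=3$ from Lemma~\ref{d2B2}, this is what yields $\dim H^2(\mg,\mg)=1$. The hard part will be the closedness step. The number of copies of $V_{2\ell}$ inside $\wedge^2 V$ — and hence $\dim{\rm Hom}_{\ms}(\wedge^2 V,\mg)$, the dimension of the invariant-cochain space before $d^2\varphi=0$ is imposed — is not uniform in $\ell$: for instance $\wedge^2 V_1$ contains an extra copy of $V_{2\ell}$ precisely when $\ell$ is an odd integer. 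One must therefore check that the cocycle condition removes exactly this non-uniform excess, so that the final count stabilizes at the value $4$ for every $\ell\ge3$. The hypothesis $\ell\ge3$ enters here to guarantee that $2\ell>2$ (so that $V_{2\ell}$ is not isomorphic to the adjoint module $\Sl(2,\C)$ or to the trivial module $\langle M_{1,2}\rangle$) and that the three windows above are disjoint and contain enough distinct indices in $\{0,\dots,2\ell\}$ for the triple identities to be applied without coincidences.
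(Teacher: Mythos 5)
Your plan follows essentially the same route as the paper's proof: grade everything by the ${\rm ad}_D$-weight to confine the values of $\varphi$ to short admissible lists, treat $\varphi(M_{1,2},-)$ via the generalized Schur's lemma as a $2\times 2$ matrix of scalars on $V_1\oplus V_2$, and then use the cocycle condition on triples of $P$'s and $Q$'s together with the triples containing $M_{1,2}$ (which, because $\mr$ is non-abelian, couple the two families) to kill everything except the four parameters in $\varphi(P_i,M_{1,2})$ and $\varphi(Q_i,M_{1,2})$. You also correctly identify the two genuine subtleties: the extra copy of $V_{2\ell}$ in $\wedge^2 V_1\oplus\wedge^2 V_2$ for integer $\ell$ (which is why the paper splits into the half-integer and integer cases), and the role of $\ell\ge 3$, which is exactly what guarantees the existence of indices $i,j,k$ satisfying the separation condition \eqref{idx}.

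The one shortfall is that the decisive step is deferred rather than carried out: you state that you \emph{expect} four independent cocycles to survive and that "one must check" the cocycle condition removes the non-uniform excess, whereas the paper actually performs this check --- in the half-integer case by choosing $i,j,k$ as in \eqref{idx} to force $a_{i,j}=b_{i,j}=c_{i,j}=d_{i,j}=0$, and in the integer case by deriving the linear relations from $(d^2\varphi)(P_i,P_j,M_{1,2})=0$, $(d^2\varphi)(Q_i,Q_j,M_{1,2})=0$ and $(d^2\varphi)(P_i,Q_j,M_{1,2})=0$ and observing that together they annihilate all the $P_{i+j-l},Q_{i+j-l}$ components. As an outline the approach is sound and matches the paper's; as a proof it is incomplete until that elimination is actually executed.
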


\begin{proof}
Since the computations are different, the proof is divided into two cases depending on whether $l$ is integer or half integer.

{\bf Case 1.} Let $l\in\frac{1}{2}+\N$ and $\varphi\in Z^2(V,\mathfrak{cga}_\ell(2,\C))^{\ms}.$ Then by \eqref{Dphi}, $\varphi(P_{i},P_{j}),$ $\varphi(P_{i},Q_{j})$ and $\varphi(Q_{i},Q_{j})$ are eigenvectors of ${\rm ad}_{D}$ with eigenvalue $2(2l-i-j).$ Since $l\in\frac{1}{2}+\N,$ there is no element in $V_1\oplus V_2$ with eigenvalue $2(2l-i-j).$ Furthermore, the invariance condition follows that $\varphi(P_{i},M_{1,2})$ and $\varphi(Q_{i},M_{1,2})$ are eigenvectors of ${\rm ad}_{D}$ with eigenvalue $2(l-i).$ Thus,
\begin{align*}
\varphi(P_{i},P_{j}),\varphi(Q_{i},Q_{j}),\varphi(P_{i},Q_{j})&\in\langle M_{1,2},D,H,C\rangle,\\
\varphi(P_{i},M_{1,2}),\varphi(Q_{i},M_{1,2})&\in \langle P_{i}, Q_{i}\rangle.
\end{align*}
Therefore, we set
\begin{align*}
\varphi(P_{i},P_{j})&= \delta_{2l,i+j}a_{i,j}M_{1,2}+\delta_{2l,i+j}b_{i,j}D+\delta_{2l-1,i+j}c_{i,j}H+\delta_{2l,i+j}d_{i,j}C,\\
\varphi(P_{i},Q_{j})&= \delta_{2l,i+j}e_{i,j}M_{1,2}+\delta_{2l,i+j}f_{i,j}D+\delta_{2l-1,i+j}g_{i,j}H+\delta_{2l,i+j}h_{i,j}C,\\
\varphi(Q_{i},Q_{j})&= \delta_{2l,i+j}p_{i,j}M_{1,2}+\delta_{2l,i+j}q_{i,j}D+\delta_{2l-1,i+j}r_{i,j}H+\delta_{2l,i+j}s_{i,j}C,\\
\varphi(P_{i},M_{1,2})&= \alpha_{i}P_{i}+\beta_{i}Q_{i} \qquad \varphi(Q_{i},M_{1,2})= \gamma_{i}P_{i}+\mu_{i}Q_{i}.
\end{align*}
By the invariance of $\varphi$ with respect to $C\in\ms,$ we have
\begin{align*}
(C.\varphi)(P_{i},M_{1,2})&=[C,\varphi(P_{i},M_{1,2})]-\varphi([C,P_{i}],M_{1,2})-\varphi(P_{i},[C,M_{1,2}])\\
        &=[C,\varphi(P_{i},M_{1,2})]-(2l-i)\varphi(P_{i+1},M_{1,2})=0.
\end{align*}
This implies that $\alpha_{i}=\alpha_{i+1}, \ \beta_i=\beta_{i+1}.$ Similarly, one can obtain that $\gamma_{i}=\gamma_{i+1}, \ \mu_i=\mu_{i+1}.$
We now consider the $2$-cocycle condition $d^2\varphi=0$ for the elements $P_{i},P_{j},P_k\in V.$ Then we have
\begin{align*}
[P_{i},\varphi(P_{j},P_{k})]&+[P_{j},\varphi(P_{k},P_{i})]+[P_{k},\varphi(P_{i},P_{j})]+\varphi(P_{i},[P_{j},P_{k}])+\varphi(P_{j},[P_{k},P_{i}])+\varphi(P_{k},[P_{i},P_{j}])\\
&=\delta_{2l,j+k}a_{j,k}[P_i,M_{1,2}]+\delta_{2l,j+k}b_{j,k}[P_i,D]+\delta_{2l-1,j+k}c_{j,k}[P_i,H]+\delta_{2l+1,j+k}d_{j,k}[P_i,C]\\
&+\delta_{2l,k+i}a_{k,i}[P_j,M_{1,2}]+\delta_{2l,k+i}b_{k,i}[P_j,D]+\delta_{2l-1,k+i}c_{k,i}[P_j,H]+\delta_{2l+1,k+i}d_{k,i}[P_j,C]\\
&+\delta_{2l,i+j}a_{i,j}[P_k,M_{1,2}]+\delta_{2l,i+j}b_{i,j}[P_k,D]+\delta_{2l-1,i+j}c_{i,j}[P_k,H]+\delta_{2l+1,i+j}d_{i,j}[P_k,C]=0.
\end{align*}
Here, we can choose the indices $i,j,k$ which satisfies the condition:
\begin{align}\label{idx}
\delta_{2l,i+j}=1, \quad \delta_{2l,i+k}=\delta_{2l,j+k}=\delta_{2l-1,i+k}=\delta_{2l-1,j+k}=\delta_{2l+1,i+k}=\delta_{2l+1,j+k}=0.
\end{align}
Thus, if $i+j=2l,$ then we get that $a_{i,j}=b_{i,j}=0.$ And, if $i+j=2l-1,$ then we obtain that $c_{i,j}=0.$ Similarly, if $i+j=2l+1,$ then $d_{i,j}=0.$ Hence, $\varphi(P_{i},P_{j})=0.$ Applying the same arguments for the elements $Q_{i},Q_{j},Q_k\in V,$ we obtain that $\varphi(Q_{i},Q_{j})=0.$
Now, we consider the relation $(d^2\varphi)(P_{i},Q_{j},Q_{k})=0.$ Then we have
\begin{align*}
[P_{i},\varphi(Q_{j},Q_{k})]&+[Q_{j},\varphi(Q_{k},P_{i})]+[Q_{k},\varphi(P_{i},Q_{j})]
+\varphi(P_{i},[Q_{j},Q_{k}])+\varphi(Q_{j},[Q_{k},P_{i}])+\varphi(Q_{k},[P_{i},Q_{j}])\\
&=-\delta_{2l,k+i}e_{i,k}[Q_j,M_{1,2}]-\delta_{2l,k+i}f_{i,k}[Q_j,D]-\delta_{2l-1,k+i}g_{i,k}[Q_j,H]-\delta_{2l+1,k+i}h_{i,k}[Q_j,C]\\
&+\delta_{2l,i+j}e_{i,j}[Q_k,M_{1,2}]+\delta_{2l,i+j}f_{i,j}[Q_k,D]+\delta_{2l-1,i+j}g_{i,j}[P_k,H]+\delta_{2l+1,i+j}h_{i,j}[Q_k,C]=0.
\end{align*}
Again by choosing the indices $i,j$ to be $i+j=2l$ and $k$ with the condition \eqref{idx}, we obtain that $e_{i,j}=f_{i,j}=0.$ Similarly, we get that $g_{i,j}=h_{i,j}=0.$ Thus, $\varphi(P_i,Q_j)=0.$
Summarizing all the relations obtained above, we derive that $\dim(Z^2(V,\mathfrak{cga}_\ell(2,\C))^{\ms})=4.$

{\bf Case 2.} Let $l\in\N$ and $\varphi\in Z^2(V,\mathfrak{cga}_\ell(2,\C))^{\ms}.$ Then as in the previous case $\varphi(P_{i},P_{j})$, $\varphi(P_{i},Q_{j})$ and $\varphi(Q_{i},Q_{j})$ are eigenvector of ${\rm ad}_{D}$ with eigenvalue $2(2l-i-j).$ Since $l\in\N,$ there exist elements in $V_1\oplus V_2$ with eigenvalue $2(2l-i-j)$ with respect to ${\rm ad}_{D}.$ Furthermore, the invariance condition follows that $\varphi(P_{i},M_{1,2})$ and $\varphi(Q_{i},M_{1,2})$ is eigenvector of ${\rm ad}_{D}$ with eigenvalue $2(l-i).$
Therefore, we have the following:
\begin{align*}
\varphi(P_{i},P_{j}),\varphi(Q_{i},Q_{j}),\varphi(P_{i},Q_{j})&\in\langle P_{i+j-l},Q_{i+j-l},M_{1,2},D,H,C\rangle,\\
\varphi(P_{i},M_{1,2}),\varphi(Q_{i},M_{1,2})&\in \langle P_{i}, Q_{i}\rangle.
\end{align*}
First we set
\begin{align*}
\varphi(P_{i},P_{j})&=a_{i,j}P_{i+j-l}+b_{i,j}Q_{i+j-l}+X, \quad \varphi(Q_{i},Q_{j})=c_{i,j}P_{i+j-l}+d_{i,j}Q_{i+j-l}+Y,\\
\varphi(P_{i},Q_{j})&=e_{i,j}P_{i+j-l}+f_{i,j}Q_{i+j-l}+Z, \quad \varphi(P_{i},M_{1,2})=\alpha_{i}P_{i}+\beta_{i}Q_{i}, \quad \varphi(Q_{i},M_{1,2})=\gamma_{i}P_{i}+\mu_{i}Q_{i},
\end{align*}
where $X,Y,Z\in\langle M_{1,2},D,H,C\rangle.$ Then we first apply the invariance of $\varphi$ with respect to $C\in\ms.$ This follows the similar result as in the previous case, that is, $\alpha_{i}=\alpha_{i+1},$ $\beta_i=\beta_{i+1},$ $\gamma_{i}=\gamma_{i+1}, \ \mu_i=\mu_{i+1}.$ After that by using the $2$-cocycle identity for the same triples of elements as in the previous case, we obtain that $X=Y=Z=0.$
Now, we consider the $2$-cocycle condition for the elements $P_{i},P_{j},M_{1,2}\in V.$ Then we have
\begin{align*}
d^2(P_{i},P_{j},M_{1,2})&=[P_{i},\varphi(P_{j},M_{1,2})]+[P_{j},\varphi(M_{1,2},P_{i})]+[M_{1,2},\varphi(P_{i},P_{j})]\\
&+\varphi(P_{i},[P_{j},M_{1,2}])+\varphi(P_{j},[M_{1,2},P_{i}])+\varphi(M_{1,2},[P_{i},P_{j}])\\
&=(-a_{i,j}+f_{i,j}-f_{j,i})Q_{i+j-l}+(b_{i,j}+e_{i,j}-e_{j,i})P_{i+j-l}=0,
\end{align*}
which follows that
\begin{align}\label{111}
\nonumber
-a_{i,j}+f_{i,j}-f_{j,i}&=0,\\
b_{i,j}+e_{i,j}-e_{j,i}&=0,
\end{align}
Similarly, by $(d^2\varphi)(Q_{i},Q_{j},M_{1,2})=0$ and $(d^2\varphi)(P_{i},Q_{j},M_{1,2})=0,$ we derive the following relations, respectively:

\begin{align}\label{222}
\nonumber
-c_{i,j}-f_{i,j}+f_{j,i}&=0,\\
d_{i,j}-e_{i,j}+e_{j,i}&=0
\end{align}
and
\begin{align}\label{333}
\nonumber
-e_{i,j}-b_{i,j}+d_{j,i}&=0,\\
f_{i,j}-a_{i,j}+c_{j,i}&=0.
\end{align}

Moreover, one can directly verify that the equality $(d^2\varphi)(P_{i},Q_{i},M_{1,2})=0$ implies that $\varphi(P_i,Q_i)=0.$ Finally, summarizing the equalities \eqref{111},\eqref{222} and \eqref{333}, we get that
\begin{align*}
\varphi(P_{i},P_{j})&=\varphi(Q_{i},Q_{j})=\varphi(P_{i},Q_{j})=0, &\\
\varphi(P_{i},M_{1,2})&=uP_{i}+vQ_{i}, \qquad \varphi(Q_{i},M_{1,2})=wP_{i}+zQ_{i}.
\end{align*}
Thus, we derive that $\dim(Z^2(V,\mathfrak{cga}_\ell(2,\C))^{\ms})=4.$
\end{proof}

\begin{proof}[{\bf Proof of Theorem \ref{mainthm2}}]
Let $l=0,$ then $\mathfrak{cga}_0(2,\C)$ is isomorphic to $\Sl(2,\C)\oplus \mr,$ where $\mr$ is $3$-dimensional solvable Lie algebra with the basis $\{M_{1,2},P_{0,1},P_{0,2}\}$ and the following commutation relations:
\begin{align*}
[M_{1,2},P_{0,1}]=-P_{0,2}, \quad [M_{1,2},P_{0,2}]=P_{0,1}.
\end{align*}
Then direct computations show that $\dim(Z^2(\mathfrak{cga}_0(2,\C),\mathfrak{cga}_0(2,\C)))=31.$ Moreover, the matrix form of the derivations of $\mathfrak{cga}_0(2,\C)$ with respect to the given basis is as follows:
$$\left(
\begin{array}{cccccc}
 a_{1,1} & 0 & a_{1,3} & 0 & 0 & 0 \\
 0 & -a_{1,1} & a_{2,3} & 0 & 0 & 0 \\
 -2a_{2,3} & -2a_{1,3} & 0 & 0 & 0 & 0 \\
 0 & 0 & 0 & 0 & a_{4,5} & a_{4,6} \\
 0 & 0 & 0 & 0 & a_{5,5} & a_{5,6} \\
 0 & 0 & 0 & 0 & -a_{5,6} & a_{5,5} \\
\end{array}
\right).$$
Thus, $\dim({\rm Der}(\mathfrak{cga}_0(2,\C)))=7$ and $\dim(B^2(\mathfrak{cga}_0(2,\C),\mathfrak{cga}_0(2,\C)))=36-\dim({\rm Der}(\mathfrak{cga}_0(2,\C)))=29.$ Hence, $\dim(H^2(\mathfrak{cga}_0(2,\C),\mathfrak{cga}_0(2,\C)))=2.$
Now, if $l=\frac{1}{2},$ then $\mathfrak{cga}_{\frac{1}{2}}(2,\C)$ is nothing but, the centerless Schr\"odinger algebra $S_2.$ In \cite{Ru}, it is shown that the second adjoint cohomology space of $S_2$ is 2-dimensional. For other small values of $l,$ we have computed the adjoint cohomology groups $H^2(\mg,\mg)$ by computer and have compared with general case. The results are presented in the table below.

\begin{center}
\begin{tabular}{c|ccccc}\label{table}
$\mg$             & $\dim \mg$ & $\dim {\rm Der}(\mg)$ & $\dim B^2(\mg,\mg)$ & $\dim Z^2(\mg,\mg)$  & $\dim H^2(\mg,\mg)$ \\[4pt]
\hline
$\mathfrak{cga}_0(2,\C)$           & $6$  & $7$   & $29$ & $31$ & $2$ \\[4pt]
$\mathfrak{cga}_\frac{1}{2}(2,\C)$ & $8$  & $9$   & $54$ & $56$ & $2$ \\[4pt]
$\mathfrak{cga}_1(2,\C)$           & $10$ & $11$   & $89$ & $91$ & $2$ \\[4pt]
$\mathfrak{cga}_\frac{3}{2}(2,\C)$ & $12$ & $13$   & $131$ & $132$ & $1$ \\[4pt]
$\mathfrak{cga}_2(2,\C)$           & $14$ & $15$   & $181$ & $182$ & $1$ \\[4pt]
$\mathfrak{cga}_\frac{5}{2}(2,\C)$ & $16$ & $17$   & $239$ & $240$ & $1$
\end{tabular}
\end{center}
If $l\geq3,$ then we can always find indices $i,j,k$ satisfying \eqref{idx}. Thus, by Lemmas \ref{d2B2},\ref{deq2} and applying the Hochschild-Serre factorization theorem, we obtain the assertion.
\end{proof}

\begin{rem}
The underlying reason for the non-vanishing second adjoint cohomology of \(\mathfrak{cga}_\ell(2,\C)\) is that the rotation generator \(M_{1,2}\) is not part of the Levi subalgebra of \(\mathfrak{cga}_\ell(2,\C)\). As a result, it obstructs the possibility of expressing \(\mathfrak{sl}(2,\mathbb{C})\)-invariant cocycles as coboundaries.
\end{rem}

\section{The second adjoint cohomology space of $\widehat{\mathfrak{cga}}_\ell(d,\C)$}

In this section, we determine the second cohomology of the mass central extension of the conformal Galilei algebra $\mathfrak{cga}_\ell(d,\C).$ Namely, we show that the second cohomology space of $\widehat{\mathfrak{cga}}_\ell(d,\C)$ with values in the adjoint module vanishes. At first, we note that $\mathfrak{cga}_\ell(d,\C)$ is isomorphic to the factor algebra $\widehat{\mathfrak{cga}}_\ell(d,\C)$ by its one-dimensional center. Also, we must note that $\widehat{\mathfrak{cga}}_\ell(d,\C)$ has the following non-trivial semidirect product:
$$\widehat{\mathfrak{cga}}_\ell(d,\C)=\ms\ltimes \mn,$$
where $\ms=\Sl(2,\C)\oplus\so(d,\C),$ the radical $\mn$ is $2$-step nilpotent Lie algebra and is decomposed as the direct sum of $\ms$-submodules as follows:
$$V=V_{2l\omega,\omega_1}\oplus\C,$$
where $V_{2l\omega,\omega_1}$ is $d(2l+1)$-dimensional $\ms$-module introduced as in Section 3 and the central element $M$ generates one dimensional trivial submodule.

\subsection{The case $d\geq3$}

\begin{thm}{\label{mainthm3}} Let $\mg=\widehat{\mathfrak{cga}}_\ell(d,\C)$ and $d\geq3.$ Then
$H^2(\mg,\mg)=0.$
\end{thm}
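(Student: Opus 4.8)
The plan is to mirror the strategy of Section~4 but to account for the two features that distinguish the central extension from $\mathfrak{cga}_\ell(d,\C)$: the extra central generator $M$, and the fact that the radical $\mn$ is $2$-step nilpotent rather than abelian. Since $\ms=\Sl(2,\C)\oplus\so(d,\C)$ is semisimple, Whitehead's lemmas give $H^1(\ms,\C)=H^2(\ms,\C)=0$, so the Hochschild--Serre factorization theorem reduces the problem to $H^2(\mg,\mg)\cong H^2(\mn,\mg)^{\ms}$. Note that Lemma~\ref{2.1} cannot be invoked directly, as it requires the radical to be abelian. Thus it suffices to show that every $\varphi\in Z^2(\mn,\mg)^{\ms}$ is a coboundary. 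Writing $\mg=\ms\oplus W\oplus\C M$ as $\ms$-modules with $W=V_{2l\omega,\omega_1}$, such a $\varphi$ is determined by its restriction to $W\wedge W$ together with the values $\varphi(P_{i,j},M)$.

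First I would determine $\varphi(-,M)$. Because $M$ is $\ms$-fixed, $\ms$-invariance says exactly that $P_{i,j}\mapsto\varphi(P_{i,j},M)$ is an $\ms$-module homomorphism $W\to\mg$; since $W$ occurs with multiplicity one in $\mg$, the generalized Schur lemma forces $\varphi(P_{i,j},M)=\lambda P_{i,j}$ for a single scalar $\lambda$. The crucial step, and the point at which the nilpotent structure enters, is to evaluate the cocycle identity $d^2\varphi(P_{a,i},P_{2l-a,i},M)=0$ with $a+(2l-a)=2l$. Using $[P_{a,i},P_{2l-a,i}]=b_a M$ and $\varphi(M,M)=0$, all but two terms vanish and the identity collapses to $\lambda(b_a-b_{2l-a})M=0$. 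Because $\ell$ is a half-integer, $2\ell$ is odd, whence $(-1)^{2\ell}=-1$ and the coefficients in \eqref{3.2} satisfy $b_{2l-a}=-b_a$; the identity therefore reads $2\lambda b_a M=0$, forcing $\lambda=0$. Hence $\varphi(P_{i,j},M)=0$, which is precisely what is needed to neutralize the correction terms produced by the nonabelian bracket in the remaining identities.

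With $\varphi(-,M)=0$ established, I would analyze $\varphi$ on $W\wedge W$ by splitting its values into the $\ms$-, $W$-, and $\C M$-components. For $x,y,z\in W$ the terms $\varphi([x,y],z)$ are scalar multiples of $\varphi(M,z)=0$, so the cocycle identity on triples from $W$ becomes formally identical to the abelian situation of Section~4. Its $W$-valued component is exactly the condition treated in Lemma~\ref{mainlem3}, while the $\mathrm{ad}_D$-eigenvalue obstruction from Lemma~\ref{mainlem4} applies verbatim, since for half-integer $\ell$ no element of $W$ has eigenvalue $2(2l-i-r)$. Together these force the $\ms$- and $W$-components of $\varphi|_{W\wedge W}$ to vanish, so $\varphi$ reduces to its $\C M$-valued part $\varphi_M\colon W\wedge W\to\C M$, which by Schur is a scalar multiple of the invariant mass form $\beta(P_{m,i},P_{n,j})=\delta_{i,j}\delta_{m+n,2l}b_m$.

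Finally I would exhibit $\varphi_M$ as a coboundary. For the $\ms$-invariant $1$-cochain $\sigma$ with $\sigma|_W=\kappa\,\mathrm{Id}_W$ and $\sigma(M)=\nu M$, a short computation gives $(d\sigma)(x,y)=(2\kappa-\nu)\beta(x,y)M$ on $W\wedge W$, together with $(d\sigma)(P_{i,j},M)=0$ and vanishing $\ms$- and $W$-components. Choosing $2\kappa-\nu$ equal to the scalar of $\varphi_M$ then realizes $\varphi$ as a coboundary without disturbing any of the vanishing already obtained, proving $H^2(\mn,\mg)^{\ms}=0$ and hence $H^2(\mg,\mg)=0$. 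I expect the main obstacle to be the second step: organizing the identity $d^2\varphi(P_{a,i},P_{2l-a,i},M)=0$ correctly and recognizing that, because of the antisymmetry of the bracket, the sign relation $b_{2l-a}=-b_a$ doubles the two surviving contributions rather than cancelling them, thereby killing $\lambda$.
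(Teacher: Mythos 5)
Your proposal is correct and follows essentially the same route as the paper: Hochschild--Serre reduction to $H^2(V,\mg)^{\ms}$, Schur's lemma to control the invariant $1$-cochains and the map $\varphi(\,\cdot\,,M)$, reduction of the analysis on $W\wedge W$ to the computations of Lemmas \ref{mainlem2}--\ref{mainlem4}, and identification of the surviving $M$-valued cocycle with the mass form, which is then cancelled by the coboundary of an invariant $1$-cochain. Your explicit derivation of $\varphi(\,\cdot\,,M)=0$ from the cocycle identity on $(P_{a,i},P_{2l-a,i},M)$ is in fact a useful addition, since the paper only records $\phi=c\,{\rm Id}_{|V}+\delta$ and leaves the vanishing of $c$ implicit; the only slip is that you interchange the roles of Lemmas \ref{mainlem3} and \ref{mainlem4} (the former treats the $\ms$-valued component, the latter the $V$-valued one), which is immaterial to the argument.
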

\begin{proof}
Let $\sigma:V\to\mg$ be an $\ms$-invariant cochain. Then we apply the same arguments as in the proof of Lemma \ref{mainlem1} and show that $\sigma(V)\subseteq V.$ Moreover, the Schur's lemma we have
$$\sigma=\alpha{\rm Id}_{|V}+\theta,$$
where $\alpha\in\C$ and $\theta_{|V}\equiv 0,$ $\theta(M)=\lambda M, \ \lambda\in\C.$ However, if $\lambda=2\alpha,$ then $\sigma$ is a derivation. Thus, we get $\dim(B^{2}(V,\mg)^\ms)=1.$

Now, let $\varphi$ be an $\ms$-invariant $2$-cocycle. Define a linear mapping $\phi:V\rightarrow\mg$ as follows:
$$\phi(-)=\varphi(-,M),$$
where $M\in V$ is the central element. Then, the invariance $\varphi$ with respect to $x\in\ms$ implies that
\begin{align*}
[x,\varphi(a,M)]=\varphi([x,a],M) \ \mbox{ or } \ [x,\phi(a)]=\phi([x,a]),
\end{align*}
where $a\in V.$ In other words, $\phi$ is invariant with respect to subalgebra $\mathfrak{s}$. Using the $\ms$-invariance of $\phi$ and Schur's lemma we have $\phi=c{\rm Id}_{|V}+\delta.$ Moreover, doing the same calculations as in the proofs of Lemmas \ref{mainlem2}, \ref{mainlem3} and \ref{mainlem4} we obtain that $\varphi\in Z^2(V,V)^{\ms}.$ Since $l\in\frac{1}{2}+\N_0,$ we have
$$\varphi(P_{i,j},P_{r,t})=\delta_{i+r,2l}\alpha_{i,r}M,$$
where $P_{i,j},P_{r,t}\in V,$ that is, $ Z^2(V,V)^{\ms}=0.$ Using the invariance relation $M_{j,m}.\varphi=0, \ j\neq t,$ we have
\begin{align*}
(M_{j,m}.\varphi)(P_{i,m},P_{r,t})&=[M_{j,m},\varphi(P_{i,m},P_{r,t})]-\varphi([M_{j,m},P_{i,m}],P_{r,t})-\varphi(P_{i,m},[M_{j,m},P_{r,t}])\\
&=[M_{j,m},\varphi(P_{i,m},P_{r,t})]-\varphi(P_{i,m},P_{r,t})=0.
\end{align*}
This follows that $\varphi(P_{i,j},P_{r,t})=\delta_{2l,i+r}\delta_{j,t}\alpha_{i,r}M.$ Furthermore, the invariance of $\varphi$ with respect to $M_{j,m},$ follows that
\begin{align*}
(M_{j,t}.\varphi)(P_{i,j},P_{r,t})&=[M_{j,t},\varphi(P_{i,j},P_{r,t})]-\varphi([M_{j,t},P_{i,j}],P_{r,t})-\varphi(P_{i,j},[M_{j,t},P_{r,t}])\\
&=\varphi(P_{i,j},P_{r,j})-\varphi(P_{i,t},P_{r,t})=0.
\end{align*}
Now, by the invariance with respect to $C\in\Sl(2,\C),$ we have
\begin{align*}
(C.\varphi)(P_{i-1,j},P_{r,t})&=[C,\varphi(P_{i-1,j},P_{r,t})]-\varphi([C,P_{i-1,j}],P_{r,t})-\varphi(P_{i-1,j},[C,P_{r,t}])\\
        &=-(r+1)\varphi(P_{i,j},P_{r,t})-i\varphi(P_{i-1,j},P_{r+1,t})=0.
\end{align*}
Thus, $i\varphi(P_{i-1,j},P_{r+1,t})=-(r+1)\varphi(P_{i,j},P_{r,t}).$ Letting the indices $i,r$ run through $\{0,1,\dots,2l\},$ we obtain that
\begin{align*}
\varphi(P_{i,j},P_{r,t})&=-\frac{i}{r+1}\varphi(P_{i-1,j},P_{r+1,t})=\frac{i(i-1)}{(r+1)(r+2)}\varphi(P_{i-2,j},P_{r+2,t})\\
&=-\frac{i(i-1)(i-2)}{(r+1)(r+2)(r+3)}\varphi(P_{i-3,j},P_{r+3,t})=\dots=(-1)^i\frac{i!r!}{(2l)!}\varphi(P_{0,j},P_{2l,t}).
\end{align*}
Setting $\alpha=(-1)^{l+\frac{1}{2}}(2l)!\alpha_{0,2l},$ we get
$$\varphi(P_{i,j},P_{r,t})=\delta_{2l,i+r}\delta_{j,t}b_i\alpha M,$$
where $\delta_{j,t}$ is the Kronecker symbol and $b_i$ is defined as \eqref{3.2}.
Hence, $\dim(Z^2(V,\mg)^{\ms})=1.$ Again by the Hochschild-Serre factorization theorem, we obtain that $H^2(\mg,\mg)\cong H^2(V,\mg)^\ms=0.$
\end{proof}

\subsection{The case $d=1$}
First, note that $\widehat{\mathfrak{cga}}_\ell(1,\C)$ has the following non-trivial semidirect product:
$$\widehat{\mathfrak{cga}}_\ell(1,\C)=\ms\ltimes \mh,$$
where $\ms=\Sl(2,\C),$ the radical $\mh$ is $(2l+2)$-dimensional Heisenberg algebra and is decomposed as the direct sum of $\ms$-submodules as follows:
$$V=V_{2l}\oplus\langle M\rangle,$$
where $V_{2l}$ is $(2l+1)$-dimensional standard irreducible $\ms$-module. Denote the elements $P_{i,0}\in V_{2l}$ by $P_{i}$ for $i=0,1,\dots,2l.$
By Theorem \ref{Rauch}, we have $H^2(\mathfrak{cga}_\ell(1,\C),\mathfrak{cga}_\ell(1,\C))=0$ for $l\in\frac{1}{2}+\N_0.$ The following theorem states that the second adjoint cohomology space of mass extension of $\mathfrak{cga}_\ell(1,\C)$ vanishes.

\begin{thm}{\label{mainthm4}} Let $\mg=\widehat{\mathfrak{cga}}_\ell(1,\C).$ Then
$H^2(\mg,\mg)=0.$
\end{thm}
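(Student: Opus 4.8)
The plan is to follow the strategy of Theorem~\ref{mainthm3}. Since $\ms=\Sl(2,\C)$ is semisimple, Whitehead's lemmas give $H^1(\ms,\C)=H^2(\ms,\C)=0$, so the Hochschild--Serre factorization theorem collapses to $H^2(\mg,\mg)\cong H^2(\mh,\mg)^{\ms}$, and by \eqref{iso23} it is enough to compute $Z^2(\mh,\mg)^{\ms}$ and $B^2(\mh,\mg)^{\ms}$ and to check that they coincide. The structural fact that drives everything is the $\ms$-module decomposition $\mg\cong V_2\oplus V_{2l}\oplus\C$, where $V_2$ is the adjoint (spin one) module sitting inside $\Sl(2,\C)$, $V_{2l}$ is the spin-$l$ module, and $\C=\langle M\rangle$ is trivial, while $\mh\cong V_{2l}\oplus\C$. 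Because $l$ is a half-integer, $V_{2l}$ is isomorphic to neither $V_2$ nor $\C$, and I will use this through Schur's lemma at every step.

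First I would compute the coboundaries. As in Lemma~\ref{mainlem1}, an $\ms$-invariant $1$-cochain $\sigma\colon\mh\to\mg$ has image in $\mh$ (the adjoint summand $V_2$ does not occur in $\mh$, so $\sigma$ cannot map into $\Sl(2,\C)$), so Schur's lemma forces $\sigma|_{V_{2l}}=\beta\,\mathrm{Id}$ and $\sigma(M)=\mu M$. Substituting into $(d^1\sigma)(P_i,P_j)=[P_i,\sigma(P_j)]-[P_j,\sigma(P_i)]-\sigma([P_i,P_j])$ and using $[P_i,P_j]=\delta_{i+j,2l}b_iM$ from \eqref{3.2}, I get $(d^1\sigma)(P_i,P_j)=(2\beta-\mu)\delta_{i+j,2l}b_iM$ and $(d^1\sigma)(P_i,M)=0$. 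Thus $\sigma$ is a derivation exactly when $\mu=2\beta$; the invariant $1$-cochains form a plane and the derivations a line, so $\dim B^2(\mh,\mg)^{\ms}=1$.

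Next I would determine $Z^2(\mh,\mg)^{\ms}$. Splitting $\Lambda^2\mh=\Lambda^2V_{2l}\oplus(V_{2l}\wedge\langle M\rangle)$ with $V_{2l}\wedge\langle M\rangle\cong V_{2l}$, an invariant $2$-cochain has a part on $\Lambda^2V_{2l}$ and a part $\phi(P_i)=\varphi(P_i,M)$. As $M$ is central, $\phi\colon V_{2l}\to\mg$ is an $\ms$-homomorphism, so $\phi=c\,\mathrm{Id}$ by Schur, i.e. $\varphi(P_i,M)=cP_i$. For the first part I would use the standard decomposition $\Lambda^2V_{2l}=V_{4l-2}\oplus V_{4l-6}\oplus\cdots\oplus V_0$: since $l$ is a half-integer, the equations $4l-2-4k=2$ and $4l-2-4k=2l$ have no integer solution $k$, while $4l-2-4k=0$ is solved by $k=l-\tfrac12$. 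Hence $\Lambda^2V_{2l}$ contains exactly one trivial summand and no copy of $V_2$ or $V_{2l}$, forcing $\varphi|_{\Lambda^2V_{2l}}$ to land in $\langle M\rangle$ as a multiple of the invariant symplectic form, $\varphi(P_i,P_j)=\delta_{i+j,2l}b_i\alpha M$. (The same conclusion can be reached by the $\mathrm{ad}_D$-eigenvalue relation \eqref{Pphi}, the half-integrality of $l$ ruling out the $H$, $D$, $C$ components, in the manner of Lemma~\ref{mainlem3}.) The two parameters are then coupled by the cocycle condition: evaluating $d^2\varphi$ on $(P_i,P_j,M)$ with $i+j=2l$ and using the centrality of $M$ gives $2c\,b_iM=0$, hence $c=0$. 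Therefore $Z^2(\mh,\mg)^{\ms}$ is the one-dimensional space spanned by the symplectic cocycle.

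Finally, $B^2(\mh,\mg)^{\ms}\subseteq Z^2(\mh,\mg)^{\ms}$ and both are one-dimensional, so they agree once a nonzero coboundary is produced in the symplectic direction; this is exactly $(d^1\sigma)(P_i,P_j)=(2\beta-\mu)\delta_{i+j,2l}b_iM$ with $\mu\neq2\beta$. Consequently $H^2(\mh,\mg)^{\ms}=0$ and $H^2(\mg,\mg)=0$. I expect the main obstacle to be the cocycle computation that couples the two cochain components: in contrast to the abelian-radical setting of Lemma~\ref{2.1}, here the nontrivial Heisenberg bracket $[P_i,P_j]=\delta_{i+j,2l}b_iM$ is precisely what feeds $\varphi(\,\cdot\,,M)$ back into $d^2\varphi$ and annihilates the extra would-be cocycle direction, so the cross terms must be tracked carefully to confirm that no invariant cocycle survives beyond the coboundaries.
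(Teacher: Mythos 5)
Your proposal is correct and reaches the same dimension counts as the paper ($\dim B^2(\mh,\mg)^{\ms}=\dim Z^2(\mh,\mg)^{\ms}=1$), but the way you determine the cocycle space is genuinely different. The paper pins down $\varphi(P_i,P_j)$ by an ${\rm ad}_D$-weight argument (only the eigenvalue $2(2l-i-j)=0$ component, i.e.\ $\langle M\rangle$, can survive when $l$ is a half-integer) and then fixes the coefficients $c_{i,j}$ one by one through the recursion $i\varphi(P_{i-1},P_{j+1})=-(j+1)\varphi(P_i,P_j)$ coming from $C$-invariance, arriving at $\varphi(P_i,P_j)=(-1)^i\frac{i!j!}{(2l)!}\varphi(P_0,P_{2l})$. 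You instead invoke the Clebsch--Gordan decomposition $\Lambda^2V_{2l}=V_{4l-2}\oplus V_{4l-6}\oplus\cdots\oplus V_0$ together with the generalized Schur lemma to see in one stroke that ${\rm Hom}_{\ms}(\Lambda^2V_{2l},\mg)$ is one-dimensional and spanned by the symplectic form $\delta_{i+j,2l}b_i$ of \eqref{3.2}; this is cleaner and avoids the coefficient chase, at the cost of relying on the explicit $\Sl_2$ plethysm. A further point in your favour: the paper disposes of the component $\phi(-)=\varphi(-,M)$ only by appeal to ``the same technique as in Theorem~\ref{mainthm3}'', where $\phi=c\,{\rm Id}_{|V}+\delta$ is obtained but the vanishing of $c$ is not spelled out; your evaluation of $d^2\varphi$ on $(P_i,P_j,M)$ with $i+j=2l$, which yields $2c\,b_iM=0$ and hence $c=0$, makes this step explicit and is exactly the cross-term computation needed to close the argument. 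Both routes then conclude identically via $Z^2=B^2$ and the Hochschild--Serre isomorphism.
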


\begin{proof}
Let $\sigma:V\to\mg$ be an $\ms$-invariant cochain. Applying the same arguments as in the proof of Lemma \ref{mainlem1} we get that $\sigma(V)\subseteq V.$ Moreover, by the Schur's lemma we have
$$\sigma=\alpha{\rm Id}_{|V}+\delta,$$
where $\alpha\in\C$ and $\delta_{|V}\equiv 0,$ $\delta(M)=\lambda M, \ \lambda\in\C.$ However, if $\lambda=2\alpha,$ then $\sigma$ is a derivation. Thus, we get $\dim(B^{2}(V,\mg)^{\ms})=1.$
Now, let $\varphi\in Z^2(V,\mg)^{\ms}.$ By the same technique as in the proof of Theorem \ref{mainthm3}, we obtain that $\varphi\in Z^2(V,V)^{\ms}.$ Taking $l\in\frac{1}{2}+\N_0$ into account, invariant 2-cocycles with respect to $D\in\ms$ are of the following form:
$$\varphi(P_{i},P_{j})=\delta_{2l,i+j}c_{i,j}M,$$
where $c_{i,j}\in\C.$ Now, by the invariance with respect to $C\in\ms,$ we have
\begin{align*}
(C.\varphi)(P_{i-1},P_{j})&=[C,\varphi(P_{i-1},P_{j})]-\varphi([C,P_{i-1}],P_{j})-\varphi(P_{i-1},[C,P_{j}])\\
        &=-(2l-i+1)\varphi(P_{i},P_{j})-(2l-j)\varphi(P_{i-1},P_{j+1})=0.
\end{align*}
which implies that $i\varphi(P_{i-1},P_{j+1})=-(j+1)\varphi(P_{i},P_{j}).$ Letting the indices $i,j$ run through $\{0,1,2,\dots,2l\}$ we obtain that $$\varphi(P_{i},P_{j})=(-1)^i\frac{i!j!}{(2l)!}\varphi(P_{0},P_{2l}).$$
Summarizing all the relations obtained above we get that $ \dim(Z^2(V,\mg)^{\ms})=1.$ Hence, $H^2(\mg,\mg)\cong H^2(V,\mg)^\ms=0.$
\end{proof}

\subsection{The case $d=2$}
For $i=0,1,\dots,2l,$ denote $P_{i,1},P_{i,2}$ by $P_i$ and $Q_i,$ respectively. Then $\mg=\widehat{\mathfrak{cga}}_\ell(2,\C)$ is isomorphic to the following non-trivial semidirect product:
$$\ms\ltimes \mr,$$
where $\ms=\Sl(2,\C),$ $\mr$ is $(4l+4)$-dimensional solvable Lie algebra with the basis $\{M_{1,2},P_{0},Q_{0},\dots,P_{2l},Q_{2l},M\}$ and the following non-trivial commutation relations:
\begin{align*}
[M_{1,2},P_{i}]&=-Q_{i}, & [M_{1,2},Q_{i}]&=P_{i},\\
 [P_{m},P_{2l-m}]&=b_m M, & [Q_{m},Q_{2l-m}]&=b_m M,
\end{align*}
where $b_m=(-1)^{m+\ell+\frac{1}{2}}(2\ell-m)!m!,\ i,m=0,1,\dots,2l.$ It should be noted that $\mr$ is decomposed as a direct sum of the submodules as follows:
\begin{align*}
V=V_1\oplus V_2\oplus \langle M\rangle\oplus \langle M_{1,2}\rangle,
\end{align*}
where $V_1=\langle P_{0},P_1,\dots,P_{2l} \rangle,$ $V_2=\langle Q_{0},Q_1,\dots,Q_{2l} \rangle.$ If $l=\frac{1}{2},$ then $\widehat{\mathfrak{cga}}_{\frac{1}{2}}(2,\C)$ is nothing but, the Schr\"odinger algebra $\widehat{S}_2.$ In \cite{Ru}, it is shown that the second adjoint cohomology space of $\widehat{S}_2$ is 2-dimensional. For $l\geq\frac{3}{2}$ we state the following result:

\begin{thm}{\label{mainthm5}}
Let $\mg=\widehat{\mathfrak{cga}}_\ell(2,\C).$ Then we have $H^2(\mg,\mg)\cong\C.$
\end{thm}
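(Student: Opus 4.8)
The plan is to follow the same Hochschild--Serre strategy used throughout the paper, reducing the computation of $H^2(\mg,\mg)$ to the $\ms$-invariant cohomology of the radical $\mr$. Since $\ms=\Sl(2,\C)$ is semisimple, the factorization theorem gives $H^2(\mg,\mg)\cong H^2(\mr,\mg)^{\ms}$, and by the isomorphism \eqref{iso23} this equals $Z^2(\mr,\mg)^{\ms}/B^2(\mr,\mg)^{\ms}$. The goal is to show this quotient is one-dimensional. First I would compute $B^2(\mr,\mg)^{\ms}$ by determining the $\ms$-invariant derivations exactly as in Lemma \ref{d2B2}: an $\ms$-invariant $1$-cochain $\sigma$ must send each weight space to itself, and Schur's lemma forces $\sigma(P_i)=\lambda_{1,1}P_i+\lambda_{1,2}Q_i$, $\sigma(Q_i)=\lambda_{2,1}P_i+\lambda_{2,2}Q_i$, $\sigma(M_{1,2})=\g M_{1,2}$, together with a possible scaling on the new central element $M$. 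Imposing the derivation property on the brackets $[M_{1,2},P_i]=-Q_i$ and on the central relations $[P_m,P_{2l-m}]=b_m M$ will cut down the parameters, and I expect $\dim(B^2(\mr,\mg)^{\ms})$ to come out slightly larger than the $3$ of the centerless case because of the extra central generator.

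Next I would compute $Z^2(\mr,\mg)^{\ms}$. The eigenvalue analysis under $\mathrm{ad}_D$ (as in \eqref{Dphi} and \eqref{Pphi}) restricts the possible targets of $\varphi$ on the pairs $P_i,Q_j,M_{1,2},M$, and I would reuse the arguments of Lemma \ref{deq2}, splitting into the integer and half-integer subcases. The crucial new feature, compared to Theorem \ref{mainthm2}, is the central element $M$ and the nonzero brackets $[P_m,P_{2l-m}]=[Q_m,Q_{2l-m}]=b_m M$: these extra structure constants feed into the $2$-cocycle identity $d^2\varphi=0$ and change which coefficients survive. In particular, cocycles of the form $\varphi(P_i,P_j),\varphi(Q_i,Q_j),\varphi(P_i,Q_j)$ landing in $\langle M\rangle$ now arise naturally, mirroring the extension cocycle computed in Theorem \ref{mainthm3}, and I expect these to contribute the nonzero class that survives to cohomology. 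I would carefully track the surviving parameters $\alpha_i,\beta_i,\g_i,\mu_i$ on $\varphi(P_i,M_{1,2}),\varphi(Q_i,M_{1,2})$ and the central-valued part, using the $C$-invariance to propagate relations across the index $i$ and the cocycle identity on triples such as $(P_i,P_j,M_{1,2})$ and $(P_i,P_j,P_k)$ to kill the remaining freedom.

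Finally I would assemble the dimension count $\dim H^2(\mg,\mg)=\dim Z^2(\mr,\mg)^{\ms}-\dim B^2(\mr,\mg)^{\ms}$ and verify it equals $1$. As in the proof of Theorem \ref{mainthm2}, the cases of small $l$ (here $l=\tfrac12$, already known to give dimension $2$, and possibly $l=1,\tfrac32$) should be handled separately by direct computer computation and tabulated, while the generic bound $l\geq 2$ or so permits choosing indices $i,j,k$ satisfying a condition analogous to \eqref{idx} so that the general argument applies uniformly.

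\medskip

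\noindent\textbf{Main obstacle.} The hard part will be disentangling how the central relations $[P_m,P_{2l-m}]=b_m M$ interact with the $2$-cocycle identity: unlike the centerless case, the brackets $[P_i,\varphi(\cdot,\cdot)]$ and $\varphi(P_i,[\cdot,\cdot])$ no longer vanish on all triples, so the clean cancellations of Lemma \ref{deq2} are disrupted and one must carefully recompute which of the coefficients $a_{i,j},b_{i,j},\dots$ are forced to zero. I expect the delicate bookkeeping to hinge on showing that exactly one combination of the $M$-valued cocycle components survives modulo coboundaries, echoing the mechanism in Theorem \ref{mainthm3} but now with the rotation generator $M_{1,2}$ present outside the Levi factor, which (as noted in the Remark) is precisely what prevents this surviving class from being trivialized.
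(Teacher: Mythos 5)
Your strategy is exactly the paper's: reduce via Hochschild--Serre to $H^2(V,\mg)^{\ms}$ with $\ms=\Sl(2,\C)$, compute the $\ms$-invariant derivations via the generalized Schur's lemma to get $B^2(V,\mg)^{\ms}$, and pin down $Z^2(V,\mg)^{\ms}$ by the $\mathrm{ad}_D$-weight analysis, $C$-invariance, and the cocycle identity on triples involving $M_{1,2}$ and $M$. Your ``main obstacle'' paragraph also correctly identifies where the new work lies, namely that the central brackets $[P_m,P_{2l-m}]=[Q_m,Q_{2l-m}]=b_mM$ feed extra terms $\varphi(P_i,[P_j,P_k])$ into $d^2\varphi=0$ and couple the $M_{1,2}$- and $M$-valued components.

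Two issues, one minor and one substantive. Minor: the mass central extension $\widehat{\mathfrak{cga}}_\ell(2,\C)$ exists only for $\ell\in\frac12+\N_0$, so your proposed split into integer and half-integer subcases (and the auxiliary value $l=1$) is vacuous; only the half-integer analysis of Lemma \ref{deq2} is relevant here, and the absence of integer weights is precisely what forces $\varphi(P_i,P_j)$, $\varphi(P_i,Q_j)$, $\varphi(Q_i,Q_j)$ into $\langle M_{1,2},D,H,C,M\rangle$. Substantive: the proposal never carries out the dimension counts that constitute the proof. You predict that the difference $\dim Z^2-\dim B^2$ ``comes out'' to $1$, but the paper's argument consists exactly in showing $\dim B^2(V,\mg)^{\ms}=5$ (the $8$ Schur parameters cut down by $\lambda_{1,1}=\lambda_{2,2}$, $\lambda_{1,2}=-\lambda_{2,1}$, $\gamma_{1,1}=\gamma_{2,1}=0$, $\gamma_{2,2}=2\lambda_{1,1}$, leaving a $3$-dimensional space of invariant derivations) and $\dim Z^2(V,\mg)^{\ms}=6$ (the surviving parameters $\xi,x,z,\alpha,\beta,\gamma,\mu$ subject to $\gamma+\beta+x+z=0$, after the relations $\eta=-\xi$, $\varepsilon=0$, $\rho=0$, $\varrho=2(\alpha+\mu)$, $2y=\alpha-\mu$ and the $C$-recursions $jx_{i+1,j-1}=-(i+1)x_{i,j}$). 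Without these computations the claim that exactly one class survives is an expectation, not a proof; in particular you would need $l\geq\tfrac32$ to choose indices satisfying \eqref{idx}, which is the hypothesis under which the theorem is actually asserted.
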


\begin{proof}
Let $\sigma$ be an $\ms$-invariant 1-cochain. Then $\sigma(P_{i}),\sigma(Q_{i}), \ i\in\{0,1,\dots,2l\}$ and $\sigma(M_{1,2}),\sigma(M)$ are eigenvectors of ${\rm ad}_D$ with eigenvalue $2(l-i)$ and $0,$ respectively. Note that $0$ does not occur as a weight since $l\in\frac{1}{2}+\N_0$, and hence, we have the following relations:
\begin{align*}
\sigma(P_{i}),\sigma(Q_{i})&\in\langle P_{i},Q_{i} \rangle, \quad  \sigma(M_{1,2}),\sigma(M)\in\langle M_{1,2},M\rangle.
\end{align*}
Moreover, by the invariance relation $x.\sigma=0$ for $x\in\ms,$ we get that $\sigma:V\to V$ is an $\ms$-module homomorphism. Then by the generalized Schur's lemma we have
\begin{align}\label{InvB1}
\sigma(P_i)&=\lambda_{1,1}P_i+\lambda_{1,2}Q_i, & \sigma(Q_i)&=\lambda_{2,1}P_i+\lambda_{2,2}Q_i,\\
\label{InvB2}
\sigma(M_{1,2})&=\gamma_{1,1}M_{1,2}+\gamma_{1,2}M, & \sigma(M)&=\gamma_{2,1}M_{1,2}+\gamma_{2,2}M,
\end{align}
where $\lambda_{i,j},\gamma_{i,j}\in\C.$ Now, we define the dimension of the space of $\ms$-invariant derivations. Let $d$ be a derivation defined as \eqref{InvB1} and \eqref{InvB2}. Then, taking $[M_{1,2},P_{i}]=-Q_{i}$ into account, we have
\begin{align*}
-d(Q_{i})&=d([M_{1,2},P_{i}])=[d(M_{1,2}),P_{i}]+[M_{1,2},d(P_{i})]\\
&=\gamma_{1,1}[M_{1,2},P_{i}]+\lambda_{1,1}[M_{1,2},P_{i}]+\lambda_{1,2}[M_{1,2},Q_{i}].
\end{align*}
This implies that $\lambda_{2,2}=\lambda_{1,1}+\gamma_{1,1}, \ \lambda_{1,2}=-\lambda_{2,1}.$ Similarly, by $[M_{1,2},Q_{i}]=P_{i},$ we get that $\lambda_{1,1}=\lambda_{2,2}+\gamma_{1,1}.$ Hence, we obtain that $\lambda_{1,1}=\lambda_{2,2}, \ \gamma_{1,1}=0.$ Moreover, by $[P_{m},P_{2l-m}]=b_m M,$ we get that $\gamma_{2,1}=0, \ \gamma_{2,2}=2\lambda_{1,1}.$ Thus, $\dim({\rm Der}(\mg)^{\ms})=3$ and hence, $\dim(B^2(V,\mg)^{\ms})=5.$

Now, we determine the $\ms$-invariant cocycles. Let $\varphi\in Z^2(V,\mg)^{\ms}.$ Then by \eqref{Dphi}, $\varphi(P_{i},P_{j}),$ $\varphi(P_{i},Q_{j})$ and $\varphi(Q_{i},Q_{j})$ are eigenvectors of ${\rm ad}_{D}$ with eigenvalue $2(2l-i-j).$ Since $l\in\frac{1}{2}+\N,$ there is no element in $V_1\oplus V_2$ with eigenvalue $2(2l-i-j).$ Therefore, we can derive
$$\varphi(P_i,P_j),\varphi(P_i,Q_j),\varphi(Q_i,Q_j)\in\langle M_{1,2},D,H,C, M\rangle,$$
Furthermore, the invariance condition follows that
$[D,\varphi(P_{i},M_{1,2})]=2(l-i)\varphi(P_{i},M_{1,2}).$
So, $\varphi(P_{i},M_{1,2})$ eigenvector of ${\rm ad}_{D}$ with eigenvalue $2(l-i)$ so, are
$\varphi(Q_{i},M_{1,2}),$ $\varphi(P_{i},M)$ and $\varphi(Q_{i},M).$ Moreover, we can show that $\varphi(M_{1,2},M)$ is in zero eigenspace.
Therefore, we can set
\begin{align*}
\varphi(P_{i},P_{j})&= \delta_{2l,i+j}a_{i,j}M_{1,2}+\delta_{2l,i+j}b_{i,j}D+\delta_{2l-1,i+j}c_{i,j}H+\delta_{2l+1,i+j}d_{i,j}C+\delta_{2l,i+j}x_{i,j}M,\\
\varphi(P_{i},Q_{j})&= \delta_{2l,i+j}e_{i,j}M_{1,2}+\delta_{2l,i+j}f_{i,j}D+\delta_{2l-1,i+j}g_{i,j}H+\delta_{2l+1,i+j}h_{i,j}C+\delta_{2l,i+j}y_{i,j}M,\\
\varphi(Q_{i},Q_{j})&= \delta_{2l,i+j}p_{i,j}M_{1,2}+\delta_{2l,i+j}q_{i,j}D+\delta_{2l-1,i+j}r_{i,j}H+\delta_{2l+1,i+j}s_{i,j}C+\delta_{2l,i+j}z_{i,j}M,\\
\varphi(P_{i},M_{1,2})&= \alpha_{i}P_{i}+\beta_{i}Q_{i} \qquad \varphi(Q_{i},M_{1,2})= \gamma_{i}P_{i}+\mu_{i}Q_{i}, \qquad \varphi(M_{1,2},M)= \rho M_{1,2}+\varrho M,\\
\varphi(P_{i},M)&= \varepsilon_{i}P_{i}+\xi_{i}Q_{i} \qquad \varphi(Q_{i},M)= \eta_{i}P_{i}+\zeta_{i}Q_{i}.
\end{align*}
Now, we repeat the computation that is done in the proof of Lemma \ref{deq2}. We first apply the invariance of $\varphi$ with respect to $C\in\ms.$ This follows the similar result as in the previous case, that is, $\alpha_{i}=\alpha_{i+1},$ $\beta_i=\beta_{i+1},$ $\gamma_{i}=\gamma_{i+1}, \ \mu_i=\mu_{i+1}.$ Similarly, the relations $(C.\varphi)(P_i,M)=0$ and $(C.\varphi)(Q_i,M)=0$ yields to $\varepsilon_i=\varepsilon_{i+1},$ $\xi_i=\xi_{i+1},$ $\eta_i=\eta_{i+1},$ $\zeta_i=\zeta_{i+1}.$
We now consider the $2$-cocycle condition $d^2\varphi=0$ for the elements $P_{i},P_{j},P_k\in V.$ Then we have
\begin{align*}
[P_{i},\varphi(P_{j},P_{k})]&+[P_{j},\varphi(P_{k},P_{i})]+[P_{k},\varphi(P_{i},P_{j})]+\varphi(P_{i},[P_{j},P_{k}])+\varphi(P_{j},[P_{k},P_{i}])+\varphi(P_{k},[P_{i},P_{j}])\\
&=\delta_{2l,j+k}a_{j,k}[P_i,M_{1,2}]+\delta_{2l,j+k}b_{j,k}[P_i,D]+\delta_{2l-1,j+k}c_{j,k}[P_i,H]+\delta_{2l+1,j+k}d_{j,k}[P_i,C]\\
&+\delta_{2l,k+i}a_{k,i}[P_j,M_{1,2}]+\delta_{2l,k+i}b_{k,i}[P_j,D]+\delta_{2l-1,k+i}c_{k,i}[P_j,H]+\delta_{2l+1,k+i}d_{k,i}[P_j,C]\\
&+\delta_{2l,i+j}a_{i,j}[P_k,M_{1,2}]+\delta_{2l,i+j}b_{i,j}[P_k,D]+\delta_{2l-1,i+j}c_{i,j}[P_k,H]+\delta_{2l+1,i+j}d_{i,j}[P_k,C]\\
&+\delta_{2l,j+k}b_j(\varepsilon P_i+\xi Q_i)+\delta_{2l,i+k}b_k(\varepsilon P_j+\xi Q_j)+\delta_{2l,i+j}b_i(\varepsilon P_k+\xi Q_k)=0.
\end{align*}
Here, we can choose the indices $i,j,k$ which satisfies the condition \eqref{idx}. If $i+j=2l,$ then we get that $a_{i,j}=-b_i\xi, \ 2(l-k)b_{i,j}=b_i\varepsilon,$
where $b_i$ is defined as \eqref{3.2}. Since the right hand side of the last equation does not depend on $k,$ hence, we derive $b_{i,j}=0$ and $\varepsilon=0.$ Furthermore, if $i+j=2l-1,$ then we obtain that $c_{i,j}=0.$ Similarly, if $i+j=2l+1,$ then $d_{i,j}=0.$ Hence,
$$\varphi(P_{i},P_{j})=\delta_{2l,i+j}(b_i\xi M_{1,2}+x_{i,j}M), \qquad \varphi(P_{i},M)= \xi Q_{i}.$$
Applying the same arguments for the elements $Q_{i},Q_{j},Q_k\in V,$ we obtain that $\varphi(Q_{i},Q_{j})=\delta_{2l,i+j}(b_i\eta M_{1,2}+z_{i,j}M),$ $\varphi(Q_{i},M)= \eta P_{i}.$

Now, consider the relation $(d^2\varphi)(P_{i},Q_{j},Q_{k})=0.$ Then we have
\begin{align*}
&[P_{i},\varphi(Q_{j},Q_{k})]+[Q_{j},\varphi(Q_{k},P_{i})]+[Q_{k},\varphi(P_{i},Q_{j})]
+\varphi(P_{i},[Q_{j},Q_{k}])+\varphi(Q_{j},[Q_{k},P_{i}])+\varphi(Q_{k},[P_{i},Q_{j}])\\
&=\delta_{2l,j+k}b_j\eta[P_i,M_{1,2}]-\delta_{2l,k+i}(e_{i,k}[Q_j,M_{1,2}]+f_{i,k}[Q_j,D])-\delta_{2l-1,k+i}g_{i,k}[Q_j,H]-\delta_{2l,k+i}h_{i,k}[Q_j,C]\\
&+\delta_{2l,i+j}(e_{i,j}[Q_k,M_{1,2}]+f_{i,j}[Q_k,D])+\delta_{2l-1,i+j}g_{i,j}[P_k,H]+\delta_{2l+1,i+j}h_{i,j}[Q_k,C]\\
&+\delta_{2l,j+k}b_j(\varepsilon P_i+\xi Q_i)=0.
\end{align*}
Again by choosing the indices $i,j$ to be $i+j=2l$ and $k$ with the condition \eqref{idx}, we obtain that $e_{i,j}=f_{i,j}=0.$ Similarly, if $j+k=2l,$ then $\eta+\xi=0, \ \varepsilon=0.$ Moreover, the cases $i+j=2l-1$ and $i+j=2l+1$ give us $g_{i,j}=h_{i,j}=0.$ Thus, $\varphi(P_i,Q_j)=y_{i,j}M.$

Next, if we consider the invariance condition $C.\varphi=0$ for the elements $P_{i},P_{j-1}\in V, \ i+j=2l,$ and substitute the values of $\varphi,$ then we have
\begin{align*}
&[C,\varphi(P_{j},P_{j-1})]-\varphi([C,P_{i}],P_{j-1})-\varphi(P_{i},[C,P_{j-1}])=-j\varphi(P_{i+1},P_{j-1})-(i+1)\varphi(P_{i},P_{j})=0,
\end{align*}
which follows that $jx_{i+1,j-1}=-(i+1)x_{i,j}$ for $i+j=2l.$
Applying similar arguments for the elements $Q_i,Q_{j-1}\in V, \ i+j=2l,$ we derive that $jz_{i+1,j-1}=-(i+1)z_{i,j}$ for $i+j=2l.$
Moreover, the relation $(C.\varphi)(P_{i},Q_{j-1})=0,$ leads to
\begin{align*}
[C,\varphi(P_{i},Q_{j-1})]-\varphi([C,P_{i}],Q_{j-1})-\varphi(P_{i},[C,Q_{j-1}])
=-j\varphi(P_{i+1},Q_{j-1})-(i+1)\varphi(P_{i},Q_{j})=0,
\end{align*}
which follows that $jy_{i+1,j-1}=-(i+1)y_{i,j}$ for $i+j=2l.$ If we set $x_{2l,0}=(2l)!x, \ y_{2l,0}=(2l)!y, \ z_{2l,0}=(2l)!z$ and summarize the relations obtained up to now, then
\begin{align*}
\varphi(P_{i},P_{j})&= -b_{i}\xi M_{1,2}+b_{j}xM, & \varphi(P_{i},Q_{j})&= b_{j}yM, & \varphi(Q_{i},Q_{j})&= b_i\xi M_{1,2}+b_jzM,\\
\varphi(P_{i},M_{1,2})&= \alpha P_{i}+\beta Q_{i} & \varphi(Q_{i},M_{1,2})&= \gamma P_{i}+\mu Q_{i}, & \varphi(M_{1,2},M)&= \rho M_{1,2}+\varrho M,\\
\varphi(P_{i},M)&= \xi Q_{i} & \varphi(Q_{i},M)&= -\xi P_{i}.
\end{align*}
Now, let us use the $2$-cocycle condition for the elements $P_{i},P_{j},M_{1,2}\in V$ and substitute the values of $\varphi,$ then we have
\begin{align*}
&[P_{i},\varphi(P_{j},M_{1,2})]+[P_{j},\varphi(M_{1,2},P_{i})]+[M_{1,2},\varphi(P_{i},P_{j})]\\
&+\varphi(P_{i},[P_{j},M_{1,2}])+\varphi(P_{j},[M_{1,2},P_{i}])+\varphi(M_{1,2},[P_{i},P_{j}])\\
&=2\alpha[P_i,P_j]+\varphi(P_{i},Q_{j})-\varphi(P_{j},Q_{i})+b_i\varphi(M_{1,2},M)=0,
\end{align*}
which follows that $\rho=0$ and $2 b_i\alpha-2b_iy+b_i\varrho=0.$ Similarly, by $(d^2\varphi)(Q_{i},Q_{j},M_{1,2})=0,$ we get that $2b_i\mu+2b_iy+b_i\varrho=0.$ Thus, we obtain the restrictions $\varrho=2(\alpha+\mu), \ \rho=0, \ 2y=\alpha-\mu.$
Furthermore, the relation $(d^2\varphi)(P_{i},Q_{j},M_{1,2})=0$ yields to
\begin{align*}
\gamma[P_i,P_j]+\beta[Q_i,Q_j]-\varphi(P_{i},P_{j})+\varphi(Q_{j},Q_{i})=0.
\end{align*}
This follows that $\gamma+\beta+x+z=0.$ Thus, we get
\begin{align*}
\varphi(P_{i},P_{j})&= b_{i}\xi M_{1,2}+b_{j}xM, & \varphi(P_{i},Q_{j})&= \frac{1}{2}b_{j}(\alpha-\mu)M, & \varphi(Q_{i},Q_{j})&= b_i\xi M_{1,2}+b_jzM,\\
\varphi(P_{i},M_{1,2})&= \alpha P_{i}+\beta Q_{i} & \varphi(Q_{i},M_{1,2})&= \gamma P_{i}+\mu Q_{i}, & \varphi(M_{1,2},M)&= 2(\alpha+\mu) M,\\
\varphi(P_{i},M)&= \xi Q_{i} & \varphi(Q_{i},M)&= -\xi P_{i}.
\end{align*}
Hence, $\dim(Z^2(V,\widehat{\mathfrak{cga}}_\ell(2,\C))^{\ms})=6.$
If we apply the Hochschild-Serre factorization theorem, the assertion follows.
\end{proof}

\begin{rem}
Note that both conformal Galilei algebras $\mathfrak{cga}_\ell(2,\C)$ and its mass extension $\widehat{\mathfrak{cga}}_\ell(2,\C)$ are not perfect Lie algebras since their derived subalgebras do not contain the rotation generator $M_{1,2}.$
\end{rem}

\end{document}